\newtheorem{theorem}{Theorem}[section]
\newtheorem{proposition}[theorem]{Proposition}
\newtheorem{lemma}[theorem]{Lemma}
\newtheorem{remark}[theorem]{Remark}
\newtheorem{definition}[theorem]{Definition}
\makeatletter \@addtoreset{equation}{section} \makeatother
\renewcommand{\theequation}{\thesection.\arabic{equation}}
\newcommand{\R}{\mathbb{R}}
\newcommand{\be}{\begin{equation}}
\newcommand{\ee}{\end{equation}}
\newcommand{\bes}{\begin{eqnarray}}
\newcommand{\ees}{\end{eqnarray}}
\def\T{\mathcal{T}}
\def\bega{\begin{array}}
\def\enda{\end{array}}
\def\begi{\begin{itemize}}
\def\endi{\end{itemize}}
\begin{document}

\title[Solutions  including wave breaking for a class of equations]{\bf Existence and regularity for global solutions including breaking waves from Camassa-Holm and Novikov equations to  $\lambda$-family equations}

\author[G. Chen]{Geng Chen}
\address{Geng Chen \newline
Department of Mathematics, University of Kansas, Lawrence, KS 66045, USA.}
\email{gengchen@ku.edu}

\author[Y. Shen]{Yannan Shen}
\address{Yannan Shen \newline
Department of Mathematics, University of Kansas, Lawrence, KS 66045, USA.}
\email{yshen@ku.edu}

\author[S. Zhu]{Shihui Zhu}\footnote{Corresponding author: Shihui Zhu, E-mail: shihuizhumath@163.com}
\address{Shihui Zhu \newline
Department of Mathematics, Sichuan Normal University, Chengdu, Sichuan 610066, China.}
\email{shihuizhumath@163.com}


\maketitle

\begin{abstract}
In this paper,  we prove the global existence of H\"older continuous solutions for the Cauchy problem of a family of partial differential equations, named as $\lambda$-family equations, where $\lambda$ is the power of nonlinear wave speed.  The  $\lambda$-family equations include Camassa-Holm equation ($\lambda=1$) and Novikov equation  ($\lambda=2$) modelling water waves,  where solutions generically form finite time cusp singularities, or in another word,  show wave breaking phenomenon.  The global energy conservative solution we construct is H\"older  continuous with exponent $1- \frac{1}{2\lambda}$. The existence result also paves the way for the future study on uniqueness and Lipschitz continuous dependence.
\end{abstract}
\bigskip

\textit{Key Words: global existence, water wave equations, conservative solution, cusp singularity.}
%
\section{Introduction}

In this paper, we consider  a family of equations  given in the following form:
\be\label{E0} u_t-u_{txx}+(\lambda+2)u^{\lambda}u_x=(\lambda+1)u^{\lambda-1}u_xu_{xx}+u^{\lambda}u_{xxx}, \ee
where $u:=u(t,x): \mathbb{R}^+\times \mathbb{R}\rightarrow \mathbb{R}$ and $\lambda\geq 1$. For convenience, we name \eqref{E0} as the $\lambda$-family equations.

First,  let's introduce the physical backgrounds of \eqref{E0}. Among all equations, two special cases of (\ref{E0}) are of particular importance in studying water waves.
\begin{itemize}
\item[i.]  Camassa-Holm equation: (\ref{E0}) with $\lambda=1$
\be\label{CH}u_t-u_{txx}+3u u_x-2 u_xu_{xx}-uu_{xxx}=0.\ee
\item[ii.]
 Novikov equation:  (\ref{E0}) with $\lambda=2$
 \be\label{N}u_t-u_{txx}+4u^2 u_x-3 uu_xu_{xx}-u^2u_{xxx}=0.\ee
\end{itemize}
One can also find \eqref{E0} from a more general class of equations
\be\label{E0b} u_t-u_{txx}+(b+1)u^{\lambda}u_x-bu^{\lambda-1}u_xu_{xx}-u^{\lambda}u_{xxx}=0,\quad u_0(x)=u(0,x),\ee
when $b=\lambda+1$.  Especially,  when $\lambda=1$,  the equation \eqref{E0b} is sometimes called as the $b$-family equations or $b$-equations \cite{Anco,Panos},  including the Degasperis-Procesi equation when $b=3$.



As a representative equation in the form of \eqref{E0}, the Camassa-Holm equation (\ref{CH})
was originally derived as a model for surface waves, and has been studied extensively in the past three decades because solutions of this equation enjoy many interesting properties: infinite conservation laws and complete integrability \cite{CH1993, FF1981}, existence of peaked solitons and multi-peakons \cite{CH1993, cht}, break down of classical solutions \cite{M1998,CE1998,well}, and formation of finite time breaking waves. Here breaking waves mean that solutions remain bounded while their slope becomes unbounded in finite time. In particular, breaking waves are commonly observed in the ocean and are important for a variety of reasons. We refer the reader to two simple examples discussed in \cite{CCCS}, both of which include two interacting peakons and formation of finite time cusp singularity (breaking wave), for Camassa-Holm and Novikov equations.

The integrable Novikov equation (\ref{N}), first derived in \cite{N2009}, can be regarded as one of the modified Camassa-Holm equations with cubic nonlinearity. The general equation \eqref{E0} includes other equations with higher order nonlinearity.

Due to the formation of singularities of the strong solutions, it becomes imperative to consider weak solutions. Especially if one considers global solution,  in order to go beyond the breaking wave, it is natural to consider  H\"older continuous solution, for example, $H^1$ solution for Camassa-Holm equation. Let's focus on the Cauchy problem from now on. The global existence of ($H^1$) weak solutions for Camassa--Holm equation  has been established by two  frameworks: one applies the method of vanishing viscosity (see \cite{XZ2000}) and the other one is to introduce a new semilinear system on new characteristic coordinates (see \cite{BC2, BC3}).
Especially, the existences of energy conservative and dissipative solutions have  been established by Bressan-Constantin,  in \cite{BC2} and \cite{BC3}, respectively. The existence of $\alpha$-dissipative solution, which has partial energy dissipation, has later been established in \cite{GHR}.

The uniqueness and Lipschitz continuous dependence of global $H^1$ solution for Camassa-Holm equation have been established after the existence theory. To select a unique solution from conservative, dissipative and $\alpha$-dissipative solutions, it is necessary to apply an additional proper criteria for singling out an admissible weak solution, similar as adding an entropy condition for hyperbolic conservation laws. In \cite{BCZ2015}, the authors first proved the uniqueness of energy conservative solution which satisfies the energy conservation condition for weak solution. In \cite{CCM}, one can find an interesting uniqueness result for dissipative solution. The Lipschitz continuous dependence is a more involved problem, since the solution flow is not Lipschitz continuous under the $H^1$ distance. In \cite{BF,CCCS, GHR2011,GHR2013}, one can find Lipschitz continuous dependence results for the solution constructed in \cite{BC2} under a new optimal transport metric.

The existence, uniqueness, Lipschitz continuous dependence and generic regularity of global ($W^{1,4}$) weak solution for Novikov equation have been established in \cite{CCL}. However, the global well-posedness and regularity of  H\"older continuous solutions for general systems \eqref{E0} and \eqref{E0b} are still wide open. An apparent difficulty is that there are not many energy laws to use when we study \eqref{E0} and \eqref{E0b}. In fact, although Camassa-Holm and Novikov equations are both integrable systems, equations in the form of \eqref{E0} are not generally integrable. In this paper, we will prove the existence of global conservative solution, including cusp singularity, for \eqref{E0}.

Another major motivation of this paper is to study the regularity of global H\"older continuous solutions for equations \eqref{E0} with different $\lambda$.
Actually, a very interesting observation on existing results for Camassa-Holm and Novikov equations
shows that these two equations enjoy global well-posedness theories on {\em different} spaces. More precisely, the unique conservative H\"{o}lder continuous solution is: $C^{0,\frac12}$ for Camassa-Holm equation (\ref{CH}), \cite{BC2}; and $C^{0,\frac34}$ for Novikov equation (\ref{N}), \cite{CCL}.

A very natural and interesting problem is to study how the regularity of solution for (\ref{E0}) changes with respect to $\lambda$.  Briefly speaking, in this paper, we will construct energy conservative H\"{o}lder continuous solution with exponent $1- \frac{1}{2\lambda}$ for (\ref{E0}). This result precisely shows how the regularity of solution changes with respective to $\lambda$, so it unveils an intrinsic relation between Camassa-Holm equation (linear wave speed), Novikov equation (quadratic wave speed) and the general equation \eqref{E0} (wave speed $u^{\lambda}$). We also believe that the existence theory will pave the way for future studies on uniqueness, Lipschitz continuous dependence issues and other type of solutions such as the dissipative solution.



Now we give more details on our ideas and results.

\subsection{Notations, basic setups and energy}
We first introduce some notations and basic equations derived from the smooth solution. We define the function $p(x)=\frac 12 e^{-|x|}$, $x\in \mathbb{R}$,
where  $p-p_{xx}=\delta(x)$ with the Dirac function $\delta$, and,
for any smooth function $u(x)$,
$$(1-\partial_x^2)^{-1}u=p*u(x):=\int_{-\infty}^{+\infty}p(x-y)u(y)dy,\quad \hbox{and}\quad
p*(u-u_{xx})=u.$$
Using the properties of  $p(x)$, we can express  equation (\ref{E0}) in the following form
\be\label{E}
u_t+u^{\lambda}u_x+P_x+Q=0,
\ee
where the singular integral operators $P$ and $Q$ are defined by
\be\label{PQ}P:=p*[(\lambda-\frac{1}{2})u^{\lambda-1}u_x^2+ u^{\lambda+1}] \ \ {\rm and}\ \ Q:=\frac{\lambda-1 }{2}\,p*[ u^{\lambda-2}u_x^3].\ee
Differentiating  (\ref{E}) with respect to $x$, we get
\be\label{E-2}
\begin{array}{lll}u_{t x}+u^{\lambda}u_{xx}+\frac{1}{2}u^{\lambda-1} u_x^2- u^{\lambda+1}+P+Q_x=0.
 \end{array}
 \ee
Multiplying \eqref{E} by $u$ and \eqref{E-2} by $u_x$, one can easily prove the energy conservation law
\be\label{def_E0}
 \mathcal{E}(t):=\int_{\mathbb{R}} (u^2(t,x)+u_x^2(t,x))dx=\mathcal{E}(0),
\ee
for any smooth solutions.
\subsection{A general class of equations}
Before discussing the detail analysis, we would like to introduce a more general class of equations to show why the change of regularity with respect to $\lambda$ is an interesting topic to study.

The equation \eqref{E-2}-\eqref{PQ} is a special example of the following equation:
\be\label{spe}
	u_{tx}+f'(u)\, u_{xx}+\frac{1}{2\lambda} f''(u)\, (u_x)^2=g(u,u_x)\,. 
\ee
with constant parameter $\lambda\geq\frac{1}{2}$. We require $g(u,u_x)$ to be bounded by $\|u\|_{H^1}+\|u\|_{W^{1,2\lambda}}$.
More intuitively,  equation \eqref{spe} can be formally written as
\be\label{spe2}
u_{tx}+(f'(u))^{1-\frac{1}{2\lambda}}\, \bigl[(f'(u))^{\frac{1}{2\lambda}}\,u_{x}\bigr]_x=g(u, u_x)\,.
\ee
Equation \eqref{spe} includes several important and interesting models when $\lambda$ takes different values.  
\begin{itemize}
\item $\lambda=\frac{1}{2}, \ g(u)=0$: Scalar hyperbolic conservation law.
Shock waves (discontinuity) form even when initial data are smooth, and the BV solution exists \cite{D,Glimm}.\vspace{.2cm}
\item $\lambda=\frac{1}{2}, \ f(x)=\frac{1}{2}u^2,\ g(u)=u$: Ultra short pulse equation from nonlinear optics. The singularity is widely believed to be discontinuity, see singularity formation results in \cite{LPS}. BV existence has been established in \cite{CR}. \vspace{.2cm}
\item $\lambda=1,\ g(u)=0$: A general equation including Hunter-Saxton equation ($f(u)=\frac{1}{2}u^2$), which is a simplified model from nematic liquid crystals. The global $C^{0,\frac{1}{2}}$ solution including cusp singularity exists, \cite{BZZ,HS,HZ95a}.
\vspace{.2cm}
\item  $f(u)=\frac{u^{\lambda+1}}{\lambda+1}$ with $g(u)=u^{\lambda+1}-P-Q_x$: The $\lambda$-equation \eqref{E-2}-\eqref{PQ} including Camassa-Holm equation when $\lambda=1$ ($C^{0,\frac{1}{2}}$ solution \cite{BC}) and Novikov equation when $\lambda=2$ ($C^{0,\frac{3}{4}}$ solution \cite{CCL}).
 \end{itemize}
 
In \cite{CS2015}, the authors considered equation \eqref{spe} with $g=0$,
and showed the existence of global H\"older continuous weak solutions with exponent $1-\frac{1}{2\lambda}$. In this paper, we will prove the existence of global H\"older continuous weak solutions with exponent $1-\frac{1}{2\lambda}$ for  \eqref{E-2}-\eqref{PQ}, i.e. \eqref{spe} with $g(u)=u^{\lambda+1}-P-Q_x$, which includes some interesting water wave models.

By proving this result, we shows that the quasilinear wave type equation \eqref{spe2}, enjoys better regularity when ``more'' wave speed $c(u)$
 ``stays'' out of the parentheses. Similar trend also holds for quasilinear wave equations, including the p-system (BV solution) and variational wave equation ($C^{0,\frac{1}{2}}$ solution), see \cite{CS2015}.
%
%
%
%
%

%


\subsection{Key idea}
By the energy law \eqref{def_E0}, it seems natural to find a solution $u(\cdot,t)$ in $H^1$, similar as for Camassa-Holm equation ($\lambda = 1,\ Q = 0$). However, this is not true with the presence of $Q$ when $\lambda\neq 1$.
In fact, the energy $\mathcal E(t)$ is not enough to control the cubic nonlinearity $u_x^3$ in $Q$. For Novikov equation, with $\lambda=2$, another energy conservation law including $u_x^4$ is available. Using both lower order (on $u_x^2$) and higher order  (on $u_x^4$) energy conservation laws,  in \cite{CCL} Chen-Chen-Liu proved the global existence of solution $u(\cdot,t)\in W^{1,4}$  (or $C^{0,3/4}$ by Sobolev embedding), using the characteristic method framework first established in \cite{BC2}.

For \eqref{E0}, in order to find the appropriate H\"older space, it would be natural to first search for an energy conservation law whose energy density includes $(u_x)^i$ with $i\geq 3$. However, such an energy conservation law is not available.
Different from Camassa-Holm and Novikov equations, equation \eqref{E0} is in general not an integrable system , so there are not many energy laws to use.

However, a key observation of this paper is that   by (\ref{E-2}) we find an energy balance law
\be\label{E-3}\begin{array}{lll}&(u_x^{2\lambda})_t+(u^{\lambda}u_x^{2\lambda})_x=(u^{\lambda+1}
 - P-Q_x)2\lambda u_x^{2\lambda-1}.
 \end{array}\ee
On the right hand side of this equation, we need to control the cubic nonlinearity in $Q$. Notice that.  as long as $\lambda \geq 2$, we have
\be\label{i1}
(\int |u_x|^3\, dx)\, (\int |u_x|^{2\lambda-1}\, dx)\leq (\int |u_x|^2\, dx)\, (\int |u_x|^{2\lambda}\, dx)
\ee
then we may use \eqref{E-3} to control $u_x^{2\lambda}$ . In this paper, we always call \eqref{E-3} the higher order energy balance law comparing to the lower order energy conservation law \eqref{def_E0}. As we mentioned,  for Novikov equation, in \cite{CCL}, two energy conservation laws for $u_x^2$ and $u_x^4$ are both used to run an existence proof. Here instead we will  prove the existence of weak solution using a higher order energy balance law \eqref{E-3} combing with a lower order energy conservation law on $\mathcal E(t)$.


\subsection{Main result}

In this paper, we impose the initial data
\be\label{Ei}u(0,x)=u_0(x)\in  H^1{(\mathbb{R})}\cap W^{1,2\lambda}{(\mathbb{R})}.\ee
This space is consistent with two energy laws. Since singularity will generically formed in finite time \cite{BC2,CCCS},  for the general Cauchy problem,  solution is generically not smooth even when the initial data are smooth, although some special global classical solutions might be available.

We first define the weak solution for Cauchy problem (\ref{E}) (\ref{Ei}).

\begin{definition}\label{def1}
We say $u=u(t,x)$ is a weak solution of  the Cauchy problem (\ref{E}) (\ref{Ei}) (or (\ref{E0}) (\ref{Ei}) )  if it satisfies
\begin{itemize}
\item[(i)] For any fixed $t\geq 0$, $u(t,\cdot)\in H^1{(\mathbb{R})}\cap W^{1,2\lambda}{(\mathbb{R})}$.
The map $t\mapsto u(t,\cdot)$ is Lipschitz continuous under $L^{2\lambda}(\mathbb{R})$ distance.

\item[(ii)] Solution $u=u(t,x)$ satisfies initial condition  (\ref{Ei})  in $L^{2\lambda}(\mathbb{R}),$ and
\be\label{nv_weak}
\iint_{\Gamma}
	\left\{-u_x\,  \phi_t -u_x u^{\lambda} \phi_x  +[(\frac{1}{2}-\lambda)u^{\lambda-1}u_x^2-u^{\lambda+1} + P+Q_x]  \phi \right\}\, dx\, dt+\int_{\mathbb{R}}(u_{0})_{x}\phi(0,x)\,dx=0
\ee
for every test function $\phi\in C_c^1(\mathbb{R}^+\times \mathbb{R})$.

\end{itemize}
\end{definition}

The main existence result on energy conservative solution is stated in the following Theorem.
\begin{theorem}\label{main}
Let $\lambda=1$ or $\lambda\geq 2$. 
Suppose that  $u_0\in H^1(\mathbb{R})\cap W^{1,2\lambda}(\mathbb{R})$ is an absolute continuous function on $x$. Then the initial value problem  (\ref{E}) (\ref{Ei}) admits a weak solution $u(t,x) $,
in the sense of Definition \ref{def1}, defined for all $(t,x)\in\mathbb{R}^+\times\mathbb{R}$. The solution also satisfies following properties.
\begin{itemize}
\item[(i)] $u(t,x)$ is H\"older continuous with exponent $1-\frac{ 1}{2\lambda}$ on both $t$ and $x$.
\item[(ii)] When $\lambda\geq 2$, energy density $u^2+u_x^2$ is conserved for any time $t\geq 0$, i.e.
\be\label{energy1}
\mathcal{E} (t)=\int_{\mathbb{R}} u^2(t,x)+u_x^2(t,x)dx =\mathcal{E}(0)\quad\hbox{for any}\quad t\geq0;
\ee
When $\lambda=1$ (Camassa-Holm), $\mathcal{E} (t)=\mathcal{E}(0)$ for almost any time $t\geq 0$.

\item[(iii)]  The balance law (\ref{E-3})
is satisfied in the following sense.

There exists a family of Radon measures $\{\mu_{(t)},\, t\geq0\}$,  depending continuously on time and w.r.t the topology of weak convergence of measures.
For every $t\geq 0$,  the absolutely continuous part of $\mu_{(t)}$ w.r.t. Lebesgue measure has density $u_x^{2\lambda}(t,\cdot)$, which provides a
measure-valued solution to the balance law
\be\label{weak_en}
 \int_{0}^\infty \left( \int_{\mathbb R}(\phi_t+u^{\lambda}\phi_x)d\mu_{(t)} +\int_{\mathbb R}(u^{\lambda+1}  - P-Q_x)2\lambda u_x^{2\lambda-1}\phi\, dx\right)dt-\int_{\mathbb R} (u_{0})_{x}^{2\lambda}\phi(0,x)dx=0,
\ee
for every test function $\phi\in C_c^1(\mathbb{R}^+\times \mathbb{R})$. Furthermore, when $\lambda\geq 2$, for almost every $t\geq0$, the singular part   of $\mu_{(t)}$ concentrates on the set where
$u=0$, i.e. when $c'(u)=0$ with wave speed $c(u)=u^\lambda$. When $\lambda=1$, for almost any $t\geq 0$,
 $d \mu_{(t)}=u_x^{2\lambda}(t,x)dx$ since $c'(u)=1$ is always nonzero.

\item[(iv)]
Some continuous dependence result holds. Consider a sequence of initial data ${u_0}_n$ such that
$\|{u_0}_n-u_0\|_{H^1\cap W^{1,2\lambda}}\rightarrow 0$, as $n\rightarrow\infty$. Then
the corresponding solutions $u_n(t,x)$ converge to $u(t,x)$ uniformly for $(t,x)$
in any bounded sets.
\end{itemize}
\end{theorem}

\begin{remark}
When $\lambda=1$ (Camassa-Holm), the higher order and lower order energy laws are the same. As proved in \cite{BC2}, the energy $\mathcal{E}(t)$ might concentrate when $\mathcal{E}(t)<\mathcal{E}(0)$, but this will happen at most
for a measure zero time. On the other hand, there exists a family of radon measures $\{\mu_{(t)}+u^2(t,x)dx\}$ as in part (iii) of Theorem \ref{main}, such that $\{\mu_{(t)}+u^2(t,x)dx\}$ is conserved for any time. 

In fact, because $c(u)=u$ is linear with $c'(u)=1$, for Camassa-Holm equation, wave breaking phenomena happen transiently, or in another word, $\{\mu_{(t)}\}$ is absolute continuous with respect to the Lebesgue measure for almost any time.

For Novikov equation with $\lambda=2$, in \cite{CCL}, the lower order energy $\mathcal{E}(t)$ was proved to be conserved for any time, and meanwhile, a higher order energy was proved to be conserved in the sense of Radon measure. However, since $c(u)$ is nonlinear, the wave breaking phenomenon is not necessarily transient. These properties are also true when $\lambda\geq 2$. Similarly, the wave breaking phenomenon for the variational wave equation is in general not transient \cite{BC}.
\end{remark}

This paper is divided into 6 sections. In Section 2, we provides uniform bounds on $P$, $P_x$, $Q$ and $Q_x$. In Section 3, we derive a semilinear system on new characteristic coordinates, then prove the existence of local and global solutions for this semilinear system in Sections 4 and 5, respectively. Finally, by an inverse transform in Section 6, we prove the existence of conservative solution for the original equation \eqref{E0} and finish the proof of Theorem \ref{main}.

\renewcommand{\theequation}{\thesection.\arabic{equation}}
\setcounter{equation}{0}
\section{Energy and uniform bounds on $P$ $P_x$, $Q$ and $Q_x$}
Starting from now to the end of this paper, we always assume that $\lambda=1$ or $\lambda\geq2$.


  Now, for smooth solutions of (\ref{E}),   multiplying (\ref{E}) by $u$, and multiplying (\ref{E-2}) by $u_x$ then doing integration by part,  we can deduce that
  \[\frac 12\frac{d}{dt} \int u^2 dx+\int(\frac{u^{\lambda+2}}{\lambda+2})_xdx+\int P_xudx+\int Qudx=0,\]
  and
  \[\frac12 \frac{d}{dt}\int  u_x^2 dx+\int\left(\frac 12  u^{\lambda}(u_x^2)_x+\frac 12  u^{\lambda-1}u_x^3 - (\frac{u^{\lambda+2}}{\lambda+2})_x-P_xudx- Q_{xx}u\right)dx=0.
  \]
 Then,   from the properties of $p$, we get $$\int Q_{xx}udx= \int Qudx- \frac {\lambda-1}2 \int u^{\lambda-1}u_x^3 dx$$ and the following lower order energy conservation law
  \be\label{def_E}
  \mathcal{E}(t):=\int_{-\infty}^{\infty} (u^2(t,x)+u_x^2(t,x))dx=\mathcal{E}(0)=:\mathcal{E}_0.\ee
Recall that $P$, $P_x$, $Q$, $Q_x$ are both defined by convolutions in \eqref{PQ}. It is easy to have the following two estimates on $P$ and $P_x$.
  \be\label{PI1}\|P(t)\|_{L^{\infty}},\ \  \|P_x(t)\|_{L^{\infty}}\leq \|\frac 12 e^{-|x|}\|_{L^{\infty}} \|u\|_{L^{\infty}}^{\lambda-1}
 \| (\lambda -\frac12)u_x^2+ u^{2}\|_{L^1}\leq C_1 \mathcal{E}_0^{\frac{\lambda +1}{2}},\ee
\be\label{PI2}\|P(t)\|_{L^{i}},\ \  \|P_x(t)\|_{L^{i}}\leq \|\frac 12 e^{-|x|}\|_{L^{i}} \|u\|_{L^{\infty}}^{\lambda-1}
 \| (\lambda -\frac12)u_x^2+ u^{2}\|_{L^1}\leq C_2 \mathcal{E}_0^{\frac{\lambda +1}{2}},\ee
 for some constants $C_1$ and $C_2$ and any $i\geq 1$ and when $\lambda\geq 1$.

To find a uniform estimate on  $Q$ and $Q_x$ which both include $u_x^3$ in the convolution, we need to first get a uniform estimate on $\| u_x\|_{L^{2\lambda}}$ by the higher order energy balance law \eqref{E-3}. In fact, by \eqref{E-3}, we know that
\[
\begin{split}
\frac{d}{dt}\int |u_x|^{2\lambda} \, dx&=\int(u^{\lambda+1}
 - P)2\lambda u_x^{2\lambda-1}\, dx -\int Q_x 2\lambda u_x^{2\lambda -1}\, dx\\
 &\leq 2\lambda ( \|u\|_{L^{\infty}}^{\lambda+1}+  \|P\|_{L^{\infty}})\, \int |u_x|^{2\lambda -1}\, dx
 +C_0\|u\|_{L^{\infty}}^{\lambda-2}\int |u_x|^3\, dx\, \int |u_x|^{2\lambda -1}\, dx,
\end{split}
\]
for some constant $C_0$. Notice that when $\lambda\geq 2$
\be
(\int |u_x|^3\, dx)\, (\int |u_x|^{2\lambda -1}\, dx)\leq (\int |u_x|^2\, dx)\, (\int |u_x|^{2\lambda }\, dx)
\ee
which can be proved using the following two H\"older inequalities:
\be\label{i2}
\int |u_x|^3\, dx\leq (\int |u_x|^2\, dx)^{1- \frac{1}{2\lambda-2}}\, (\int |u_x|^{2\lambda}\, dx)^{\frac{1}{2\lambda-2}},
\ee
and
\be\label{i3}
\int |u_x|^{2\lambda -1}\, dx\leq (\int |u_x|^2\, dx)^{\frac{1}{2\lambda -2}}\, (\int |u_x|^{2\lambda}\, dx)^{1- \frac{1}{2\lambda -2}}.
\ee
So we have
\[
\frac{d}{dt}\int |u_x|^{2\lambda}\, dx\leq C'(E_0)(\int |u_x|^{2\lambda}\, dx+C''(E_0))
\]
for some constants $C'(E_0)$ and $C''(E_0)$. This tells that for any time $t\in[0,\T]$,
\be\label{uxk}
(\int |u_x|^{2\lambda}\, dx)(t)\leq e^{C'(E_0) \T}(\int |u_x|^{2\lambda}\, dx)(0)+ C''(E_0)(e^{C'(E_0) \T}-1)=:J_0(\T).
\ee
When $\lambda\geq 2$, using \eqref{uxk} and \eqref{def_E}, one can bound $(\int |u_x|^3\, dx)(t)$ by a constant depending on $\T$. When $\lambda=0,$ $Q=0$. So, for any given time $\T>0$, we can easily show the following time dependent bounds, when $t\in[0,\T]$:
 \be\label{QI1}\|Q(t)\|_{L^{\infty}},\ \  \|Q_x(t)\|_{L^{\infty}}\leq C_3(\mathcal{E}_0,J_0(\T)),\ee
\be\label{QI2}\|Q(t)\|_{L^{i}},\ \  \|Q_x(t)\|_{L^{i}}\leq C_4(\mathcal{E}_0,J_0(\T)),\ee
for any $i>1$. These inequalities are also right when $\lambda=1$, sine $Q=0$ when $\lambda=1$.

We will prove similar bounds as \eqref{PI1}-\eqref{PI2} and \eqref{QI1}-\eqref{QI2} for energy conservative weak solutions in Section 5.

%
\section{The semi-linear system}
As mentioned earlier, solutions of \eqref{E0} or equivalently quasi-linear equation (\ref{E}) will form cusp singularity generically. We will consider weak solutions in the energy space $H^1(\mathbb{R})\cap W^{1,2\lambda}(\mathbb{R})$. However, it is challenge to solve (\ref{E}) directly.

Instead, we will first derive a semi-linear system \eqref{S-1} in this section, by studying smooth solutions of (\ref{E}). Next, we will find the solution of this semi-linear system in Sections 4-5 and then transform it back to a weak solution of  (\ref{E}) in Section 6.  This idea was first used in \cite{BC2} for Camassa-Holm equation with $\lambda = 1$ and in \cite{CS2015} for a class of unitary direction nonlinear wave equation with general $\lambda$.

Now we derive the semilinear system by studying smooth solutions. First,
the equation of characteristic is
 \be\label{C}\frac{dx(t)}{dt}=u^{\lambda}(t,x(t)).\ee
 We denote the characteristic passing through the point $(t,x)$ as $\tau\mapsto x(\tau;t,x)$, for any time $\tau\geq0$.
Now we introduce a new coordinates $(T,Y)$ defined by
\be\label{2.1}(t,x)\mapsto (T,Y),\ \ \ {\rm where }\ \ \ T=t, \ \ Y=\int_0^{x(0;t,x)} (1+(u_0)_x^2)^{\lambda}dx.\ee
So, $Y=Y(t,x)$ is  a characteristic coordinate satisfying
 \be\label{C2}Y_t+u^{\lambda} Y_x=0.\ee
Then for any smooth function $f:=f(T,Y(t,x))$, it is easy to get
 \[\left\{\begin{array}{lll}&f_T=f_T(\ T_t+u^{\lambda}T_x)+f_Y\ (Y_t+u^{\lambda}  Y_x)=f_t+u^{\lambda}f_x,\\[2mm]
&f_x=f_T\ T_x+ f_Y\ Y_x=f_Y\ Y_x.\end{array}\right.\]
We introduce the following new variables:
 \be\label{T} v:=2\arctan u_x   \ \ \ {\rm and }\ \ \ \xi:=\frac{(1+u_x^2)^{\lambda}}{Y_x}.\ee
It is easy to get
 \be\label{T1}\tan \frac v2= u_x,\ \ \ \ \ \ \ \ 1+u_x^2=\sec^2\frac v2,\ \ \ \ \ \frac{1}{1+u_x^2}=\cos^2\frac v2,\ee
\be\label{T2}\frac{u_x^2}{1+u_x^2}=\sin^2\frac v2,\ \  \frac{u_x}{1+u_x^2}=\frac12\sin v,\ \ \ x_Y=\frac{\xi}{(1+u_x^2)^{\lambda}}=\cos^{2\lambda}\frac v2 \cdot \xi.\ee
Then, by (\ref{E}), we have
\[u_T=u_t+u^{\lambda}u_x=-P_x-Q.\]
From (\ref{E-2}), we deduce that
\[\begin{split}
v_T&=\frac{2}{1+u_x^2} (u_{xt}+u^{\lambda}u_{xx})\\
&=-u^{\lambda-1}\frac{u_x^2}{1+u_x^2}+2u^{\lambda+1}\frac{1}{1+u_x^2}-\frac{2}{1+u_x^2}(P+Q_x)\\
&=-u^{\lambda-1}\sin^2\frac v2+2u^{\lambda+1}\cos^2\frac v2-2\cos^2\frac v2 (P +Q_x ).
\end{split}\]
Finally, by (\ref{E-3}), we deduce that
\[\begin{split}
\xi_T&=\frac{1}{Y_x}\lambda (1+u_x^2)^{\lambda-1}(u_x^2)_T-\frac{(1+u_x^2)^{\lambda}}{Y_x^2}(Y_x)_T\\
&=\frac{\lambda}{Y_x} (1+u_x^2)^{\lambda-1}[(u_x^2)_t+u^{\lambda}(u_x^2)_x+\frac 1\lambda (1+u_x^2)\lambda u^{\lambda-1}u_x]\\
&=\frac{(1+u_x^2)^{\lambda}}{Y_x}\frac{\lambda u^{\lambda-1} u_x}{1+u_x^2}+\frac{(1+u_x^2)^{\lambda}}{Y_x}\frac {u_x}{ 1+u_x^{2}}\frac {1}{ u_x^{2\lambda-1}}[(u_x^{2\lambda})_t+(u^{\lambda}u_x^{2\lambda})_x]\\
&=\frac{\lambda}{2}u^{\lambda-1} \xi \sin v
+\lambda u^{\lambda+1}\xi \sin v -\lambda  \xi \sin v( P+Q_x ).
\end{split}\]
In conclusion, we derive a new semi-linear system,
\be\label{S-1}
\left\{\begin{array}{lll}
&u_T=-P_x-Q,\\[2mm]
& v_T=-u^{\lambda-1}\sin^2\frac v2+2u^{\lambda+1}\cos^2\frac v2-2\cos^2\frac v2(P  +Q_x),\\[2mm]
& \xi_T=\frac{\lambda}{2}u^{\lambda-1} \xi \sin v
+\lambda u^{\lambda+1}\xi \sin v -\lambda  \xi \sin v( P  +Q_x),
\end{array}\right.
\ee
with initial data
\be\label{S-2}
\left\{\begin{array}{lll}
&u(0,Y)=u_0(x_0(Y)),\\[2mm]
& v(0,Y)=2 \arctan (u_0)_x(x_0(Y)),\\[2mm]
& \xi(0,Y)=1.
\end{array}\right.
\ee
Next, we find the expressions of $P$, $P_x$, $Q$ and $Q_x$ under the new coordinates $(T,Y)$. It follows from the last formula in (\ref{T2}) that
\[x(T,Y)-x(T,Y^{'})=\int_{Y^{'}}^Y \cos^{2\lambda}\frac{v(T,s)}{2} \cdot \xi(T,s)ds.\]
We take $x=x(T,Y^{'})$, then $dx=\frac{\xi}{(1+u_x^2)^{\lambda}}d Y^{'}$.
Thus, we get
\be\label{P1}\begin{split}P(T,Y)
&=\frac 12\int_{-\infty}^{+\infty}e^{-|\int_{Y}^{Y^{'}}\cos^{2\lambda}\frac{v(s)}{2} \cdot\xi(s)ds|}
\\
&\qquad\qquad\cdot\ [(\lambda-\frac12) u^{\lambda-1}\sin^2\frac{v(Y^{'})}{2} \cos^{2\lambda-2}\frac{v(Y^{'})}{2} + u^{\lambda+1}\cos^{2\lambda}\frac{v(Y^{'})}{2}]\cdot \xi(Y^{'}) dY^{'},
\end{split}\ee

\be\label{P2}\begin{split}P_x(T,Y)
&=\frac 12(\int_{Y}^{+\infty}-\int_{-\infty}^{Y})e^{-|\int_{Y}^{Y^{'}}\cos^{2\lambda}\frac{v(s)}{2} \cdot\xi(s)ds|}\\
&\qquad\qquad\cdot [(\lambda-\frac12) u^{\lambda-1}\sin^2\frac{v(Y^{'})}{2} \cos^{2\lambda-2}\frac{v(Y^{'})}{2}+ u^{\lambda+1}\cos^{2\lambda}\frac{v(Y^{'})}{2}]\cdot \xi(Y^{'}) dY^{'},
\end{split}\ee

\be\label{Q1}\begin{split}Q(T,Y)
&=\frac{\lambda -1}{4} \int_{-\infty}^{+\infty}e^{-|\int_{Y}^{Y^{'}}\cos^{2\lambda} \frac{v(s)}{2} \cdot\xi(s)ds|} [ u^{\lambda-2}\sin^3\frac{v(Y^{'})}{2} \cos^{2\lambda-3}\frac{v(Y^{'})}{2}] \cdot \xi(Y^{'}) dY^{'},\end{split}\ee
and
\be\label{Q2}Q_x(T,Y)
=\frac{\lambda -1}{4} (\int_{Y}^{+\infty}-\int_{-\infty}^{Y})e^{-|\int_{Y}^{Y^{'}}\cos^{2\lambda} \frac{v(s)}{2} \cdot\xi(s)ds|}   \cdot[ u^{\lambda-2}\sin^3\frac{v(Y^{'})}{2} \cos^{{2\lambda}-3}\frac{v(Y^{'})}{2}] \xi(Y^{'}) dY^{'}. \ee

To be more precise when $\lambda$ is not an integer, $\cos^{2\lambda}\frac v2$ and $ \cos^{2\lambda-3} \frac v2$  mean $(\cos^2\frac v2)^\lambda$ and $ (\cos^2x)^\lambda \cos^{-3} \frac v2$, respectively, and similar for other terms. 

\renewcommand{\theequation}{\thesection.\arabic{equation}}
\setcounter{equation}{0}
\section{Local existence of semi-linear system \eqref{S-1}-\eqref{S-2}}
In this section, we will prove the following local existence result of weak solutions of semi-linear system (\ref{S-1})-(\ref{S-2}) using the contraction mapping theory.
\begin{proposition}\label{prop_4.1} Let $\lambda=1$ or $\lambda\geq2$. If the initial data
$u_0\in H^1(\mathbb{R}) \cap W^{1,2\lambda}(\mathbb{R})$, then there exists $\T_0>0$, such that, the Cauchy problem (\ref{S-1})-(\ref{S-2}) has a unique solution on the interval $[0, \T_0]$.

\end{proposition}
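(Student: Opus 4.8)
The plan is to set up the semilinear system \eqref{S-1}--\eqref{S-2} as a fixed-point problem in a suitable Banach space and apply the contraction mapping principle. First I would introduce the solution space. For a small time horizon $\T_0>0$, consider the variables as functions $(u,v,\xi):[0,\T_0]\times\mathbb{R}\to\mathbb{R}$, where $Y$ is the spatial variable. The natural space is adapted to the two energy laws: I would ask that $u(T,\cdot)$ and $\sin\frac{v}{2}$-type quantities lie in the function classes consistent with $H^1\cap W^{1,k}$, while $\xi$ stays close to its initial value $\xi\equiv 1$ (bounded above and below away from zero). Concretely, I would work in the set of $(u,v,\xi)$ that are bounded and (Lipschitz or uniformly continuous) in $T$ with values in an appropriate $L^\infty\cap L^k$-type space in $Y$, equipped with the sup-in-$T$ norm. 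A key structural point is that, because of the change of variables, the measure $dx=\cos^k\frac{v}{2}\,\xi\,dY$ replaces Lebesgue measure; the energy $\mathcal{E}_0$ controls $\int\sin^2\frac{v}{2}\,\xi\,dY$ and the higher-order quantity controls $\int\sin^k\frac{v}{2}\,\xi\,dY$, so these integrals are finite and uniformly bounded on the initial data by \eqref{def_E} and \eqref{uxk}.

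Next I would define the solution operator. Rewriting \eqref{S-1} in integrated form,
\[
u(T)=u(0)+\int_0^T(-P_x-Q)\,ds,\quad
v(T)=v(0)+\int_0^T\Big(-u^{\lambda}\sin^2\tfrac v2+2u^{\lambda+2}\cos^2\tfrac v2-2\cos^2\tfrac v2(P+Q_x)\Big)\,ds,
\]
and similarly for $\xi$, I view the right-hand side as a map $\Phi$ taking a triple $(u,v,\xi)$ to a new triple, where $P,P_x,Q,Q_x$ are computed from $(u,v,\xi)$ through the explicit integral expressions \eqref{P1}--\eqref{Q2}. The heart of the argument is to show $\Phi$ is a contraction on a closed ball in the chosen space for $\T_0$ small. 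This requires two families of estimates: (a) \emph{a priori bounds} showing $\Phi$ maps the ball into itself, and (b) \emph{Lipschitz bounds} showing $\|\Phi(w_1)-\Phi(w_2)\|\le L\T_0\,\|w_1-w_2\|$ with $L$ depending only on the ball radius and $\mathcal{E}_0,J_0(\T_0)$. Both reduce to continuity and local-Lipschitz properties of the nonlinear terms, the most delicate being the singular-integral operators $P,P_x,Q,Q_x$. The uniform bounds \eqref{PI1}--\eqref{PI2} and \eqref{QI1}--\eqref{QI2} from Section 2, transcribed to the $(T,Y)$ coordinates via \eqref{P1}--\eqref{Q2}, give the needed $L^\infty$ control of these operators; the trigonometric factors $\cos^k\frac v2$, $\sin^j\frac v2$ are globally bounded, and the exponential kernel $e^{-|\int_Y^{Y'}\cos^k\frac{v}{2}\xi\,ds|}$ is bounded by $1$, which keeps all the convolution-type integrals controlled by the conserved energies.

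The main obstacle I anticipate is establishing the Lipschitz dependence of the singular operators $P,P_x,Q,Q_x$ on the triple $(u,v,\xi)$, because these quantities enter the kernel through the exponent $\int_Y^{Y'}\cos^k\frac{v}{2}\,\xi\,ds$ as well as through the integrand. Differences in $v$ and $\xi$ therefore affect both the amplitude and the exponentially weighted distance, so I would need an estimate of the form $|e^{-|A_1|}-e^{-|A_2|}|\le |A_1-A_2|$ together with careful bookkeeping of how $\int_Y^{Y'}|\Delta(\cos^k\frac v2\,\xi)|\,ds$ is dominated, using that the total mass $\int\xi\,dY$ is finite. A secondary technical point is that $Q,Q_x$ contain the factor $u^{\lambda-1}$ (with $\lambda\ge1$), so one must confirm these terms are well-behaved and, when $\lambda=0$, simply vanish as noted after \eqref{uxk}. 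Once both the self-mapping and contraction estimates are in place with constant $L\T_0<1$, the Banach fixed-point theorem yields a unique solution on $[0,\T_0]$, and uniqueness in the chosen space follows from the same Lipschitz estimate. I would close by noting that $\T_0$ depends only on $\mathcal{E}_0$ and the initial $W^{1,k}$ norm through $J_0$, which is exactly the quantitative handle needed for the continuation-to-global argument in Section 5.
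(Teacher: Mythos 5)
Your overall strategy --- recasting \eqref{S-1}--\eqref{S-2} as a fixed-point problem for a Picard map $\Phi$ on a ball in a Banach space of triples $(u,v,\xi)$ and invoking the contraction mapping principle --- is the same as the paper's. However, there is a genuine gap at the central technical point. You assert that the exponential kernel $e^{-|\int_Y^{Y'}\cos^k\frac v2\,\xi\,ds|}$ ``is bounded by $1$, which keeps all the convolution-type integrals controlled.'' The paper explicitly flags this bound as insufficient, and it is: bounding the kernel by $1$ only yields $L^\infty$ control of $P$, $P_x$, $Q$, $Q_x$, but the map $\Phi$ must return $\widetilde u\in W^{1,k}$ and $\widetilde v\in L^2\cap L^\infty$, which requires $L^k$ and $L^2$ bounds on these operators \emph{as functions of $Y$}. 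A kernel bounded only by $1$ gives no decay in $|Y-Y'|$, so the resulting quantities need not lie in any $L^q$ with $q<\infty$, and the self-mapping property of $\Phi$ fails. The missing ingredient is the paper's Lemma \ref{le_4.2}: since $\xi\geq C^-$ and $\cos^k\frac v2$ is bounded below except on the set where $|\frac v2|\geq\frac\pi4$, whose measure is controlled by $\|v\|_{L^2}^2$ via Chebyshev, one has $\int_{z_1}^{z_2}\cos^k\frac v2\,\xi\,dz\geq \frac{C^-}{2^{\lambda+1}}\bigl(|z_2-z_1|-\frac92 B^2\bigr)$, so the kernel is dominated by a fixed $L^1$ convolution kernel $g(Y-Y')$ and Young's inequality gives $\|P\|_{L^q},\|Q\|_{L^q}\leq\|g\|_{L^1}\|f\|_{L^q}$ as in \eqref{Singular}. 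This is also what rescues your Lipschitz estimate for the kernel's dependence on $(v,\xi)$: the difference $|A_1-A_2|$ you propose to use grows linearly in $|Y-Y'|$ and must be absorbed by the same exponential decay.

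A second, related error: you invoke ``that the total mass $\int\xi\,dY$ is finite.'' It is not --- $\xi(0,\cdot)\equiv 1$ and $\xi$ stays in $[C^-,C^+]$, so $\int_{\mathbb R}\xi\,dY=\infty$. The finiteness that actually powers the estimates is $\int\sin^2\frac v2\,dY\lesssim\|v\|_{L^2}^2$ (equivalently, the smallness of the set where $\cos^k\frac v2\,\xi$ degenerates), not any integrability of $\xi$ itself. With Lemma \ref{le_4.2} in hand, the rest of your outline (a priori bounds on \eqref{P1}--\eqref{Q2}, boundedness of the partial derivatives of $P,P_x,Q,Q_x$ with respect to $(u,v,\xi)$, and the standard ODE-in-Banach-space conclusion) matches the paper's argument.
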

\begin{proof}
We introduce a space $X$ defined as
\[X:= H^1(\mathbb{R})\cap W^{1,2\lambda}(\mathbb{R})\times[L^2(\mathbb{R})\cap L^{\infty}(\mathbb{R})]\times L^{\infty}(\mathbb{R})\]
with its norm $\|(u,v,\xi)\|_{X}=\|u\|_{H^1\cap W^{1,2\lambda}}+\|v\|_{L^2}+\|v\|_{L^{\infty}}+\|\xi\|_{L^{\infty}}$.  We will prove the existence of a fixed point of the map $\Phi(u,v,\xi)=(\widetilde{u},\widetilde{v},\widetilde{\xi})$, where $(\widetilde{u},\widetilde{v},\widetilde{\xi})$ is defined by
\[
\left\{\begin{array}{lll}
& \widetilde{u}(T,Y)=u_0(x_0(Y)-\int_0^{T}(P_x(\tau,Y)+Q(\tau,Y))d\tau,\\[4mm]
& \widetilde{v}(T,Y)=2 \arctan (u_0)_x(x_0(Y))+\int_0^{T}[-u^{\lambda-1}\sin^2\frac v2+2\cos^2\frac v2(u^{\lambda+1}-P-Q_x)d\tau,\\[4mm]
&  \widetilde{\xi}(T,Y)=1+\int_0^{T}[\frac{\lambda}{2}u^{\lambda-1} \xi \sin v
+ \lambda u^{\lambda+1}\xi \sin v -\lambda  \xi \sin v( P  +Q_x)]d\tau,
\end{array}\right.
\]
which directly comes from  \eqref{S-1}-\eqref{S-2}.
Here $P$, $P_x$, $Q$ and $Q_x$ are defined by (\ref{P1})-(\ref{Q2}).

As the standard ODE theory in the Banach space, we only have to prove that all functions on the right hand side of (\ref{S-1}) are locally Lipschitz continuous with respect to $(u,v,\xi)$ in $X$.
More precisely, let $K\subset X$ be a bounded domain defined by
\[K=\{(u,v,\xi)|  \|u\|_{W^{1,2\lambda}}\leq A, \|v\|_{L^2}\leq B,  \|v\|_{L^{\infty}}\leq \frac{3\pi}{2},   \xi(Y)\in[C^-, C^+]   \}\]
  for  a.e. $Y\in \mathbb{R}$, where    $A$, $B$, $C^-$ and  $C^+$  are positive constants. Then, if  the map $\Phi(u,v,\xi)$ is Lipschitz continuous on $K$, then
$\Phi(u,v,\xi)$ has a fixed point by the  contraction mapping theory, which means the existence of local solutions for \eqref{S-1}-\eqref{S-2}.

First, it follows from the Sobolev embedding that
$\|u\|_{L^{\infty}}\leq C\|u\|_{H^1}$.
Moreover, due to the uniform bounds of $v$ and $\xi$ in $K$, the maps
\[   u^{\lambda-1}\sin^2\frac v2, \ u^{\lambda+1}\cos^2\frac v2, \   \ u^{\lambda-1} \xi\ \sin v,  \]
are all Lipschitz continuous as maps from $K$ into $L^2(\mathbb{R})$, from $K$ into $L^{2\lambda}(\mathbb{R})$, as well as  from $K$ into $L^{\infty}(\mathbb{R})$.

Secondly,
in order to estimate the singular integrals in $P$, $P_x$, $Q$ and $Q_x$, we observe that $e^{-|\int_{Y}^{Y^{'}}\cos^{2\lambda} \frac{v(s)}{2} \cdot\xi(s)ds|}\leq 1$. But this is not enough to control the above singular integrals. We will use the following important lemma, where the proof can be found in the Appendix A.

\begin{lemma}\label{le_4.2}
  Let $f\in L^q(\mathbb{R})$ with $q\geq 1$. If $\|v\|_{L^2}\leq B$, then for any $(u,v,\xi)\in K$,
\be\label{Singular} \|\int_{-\infty}^{+\infty}  e^{-|\int_{Y}^{Y^{'}}\cos^{2\lambda}\frac{v(s)}{2} \cdot\xi(s)ds|} f(Y^{'})d Y^{'}\|_{L^q}\leq  \|g\|_{L^1} \ \|f\|_{L^q},\ee
where $g(z)=\min\{1,e^{ \frac{C^-}{2^{\lambda}}(\frac{9 }{2 }B^2-|z|]}\}$ and $\|g\|_{L^1}=9B^2+\frac{2^{\lambda+1}}{ C^-}$.
\end{lemma}

Thirdly, by (\ref{Singular}), we give a priori bounds on $P$, $P_x$, $Q$ and $Q_x$, which implies $P$, $P_x$, $Q$ and $Q_x$ $\in W^{1,2\lambda}$. More precisely, we deduce that
 $P$, $\partial_YP$, $P_x$, $\partial_YP_x$, $Q$ $\partial_YQ$, $Q_x$, $\partial_YQ_x$ $\in L^{2\lambda}$.
We will only prove the bounds on $Q$, and the estimates for $P$ can be obtained similarly. Using (\ref{Singular}), we derive several inequalities.

\be\label{Q-1}\begin{split}\|Q\|_{L^{2\lambda}}&\leq \frac{{(\lambda-1)} C^+}{4} \|g\|_{L^1}\|u^{\lambda-2}\sin^3\frac{v(Y^{'})}{2} \cos^{{2\lambda}-3}\frac{v(Y^{'})}{2}\|_{L^{2\lambda}}\\
&\leq \frac{{(\lambda-1)} C^+}{4} \|g\|_{L^1}\| u\|_{L^{\infty}}^{\lambda-2}  \| \sin^{3-{\frac{1}{\lambda}}}\frac{v(Y^{'})}{2} \cos^{{2\lambda}-3}\frac{v(Y^{'})}{2}\|_{L^{\infty}}           \|v^{{\frac{1}{\lambda}}}\|_{L^{2\lambda}}  \\
&\leq  \frac{{(\lambda-1)} C^+}{4} \|g\|_{L^1}\| u\|_{L^{\infty}}^{\lambda-2}\|v\|_{L^2}^{{\frac{1}{\lambda}}};\end{split}\ee

\be\label{Q-2}\begin{split}\partial_YQ(Y)&=
\frac{{(\lambda-1)} }{4} \int_{-\infty}^{+\infty}e^{-|\int_{Y}^{Y^{'}}\cos^{2\lambda}\frac{v(s)}{2} \cdot\xi(s)ds|} \cdot[\cos^{2\lambda}\frac{v(Y)}{2} \cdot\xi(Y)]\ {\rm sign} (Y^{'}-Y)\\
&\qquad \qquad\qquad \cdot[ u^{\lambda-2}\sin^3\frac{v(Y^{'})}{2} \cos^{{2\lambda}-3}\frac{v(Y^{'})}{2}] \cdot \xi(Y^{'}) dY^{'};\end{split}\ee

\be\label{Q-3} \|\partial_YQ\|_{L^{2\lambda}}\leq \frac{{(\lambda-1)} (C^+)^2}{4} \|g\|_{L^1}\| u\|_{L^{\infty}}^{\lambda-2}\|v\|_{L^2}^{{\frac{1}{\lambda}}},\ee
which gives that
\be\label{Q-4}\begin{split}\|Q_x\|_{L^{2\lambda}}\leq \frac{{(\lambda-1)} C^+}{2} \|g\|_{L^1}\|u^{\lambda-2}\sin^3\frac{v(Y^{'})}{2} \cos^{{2\lambda}-3}\frac{v(Y^{'})}{2}\|_{L^{2\lambda}}\leq  \frac{{(\lambda-1)} C^+}{2} \|g\|_{L^1}\| u\|_{L^{\infty}}^{\lambda-2}\|v\|_{L^2}^{{\frac{1}{\lambda}}};\end{split}\ee
and
\be\label{Q-5}\begin{split}\partial_YQ_x(Y)&=-\frac{{(\lambda-1)} }{2}  u^{\lambda-2}\sin^3\frac{v(Y)}{2} \cos^{{2\lambda}-3}\frac{v(Y)}{2} \cdot \xi(Y)\\
&\quad+ \frac{{(\lambda-1)} }{4} (\int_{Y}^{+\infty}-\int_{-\infty}^{Y})e^{-|\int_{Y}^{Y^{'}}\cos^{2\lambda}\frac{v(s)}{2} \cdot\xi(s)ds|}\cdot[\cos^{2\lambda}\frac{v(Y^{'})}{2} \cdot\xi(Y^{'})]\\
&\quad\quad \cdot{\rm sign} (Y^{'}-Y) \cdot[ u^{\lambda-2}\sin^3\frac{v(Y^{'})}{2} \cos^{{2\lambda}-3}\frac{v(Y^{'})}{2}] \cdot \xi(Y^{'}) dY^{'},\end{split}\ee
which gives that
\be\label{Q-6}\begin{split}\|\partial_YQ_x\|_{L^{2\lambda}}\leq\frac{{(\lambda-1)} C^+}{2}  \| u\|_{L^{\infty}}^{\lambda-2}\|v\|_{L^2}^{{\frac{1}{\lambda}}}+ \frac{{(\lambda-1)} (C^+)^2}{4} \|g\|_{L^1}\| u\|_{L^{\infty}}^{\lambda-2}\|v\|_{L^2}^{{\frac{1}{\lambda}}}.\end{split}\ee
Hence, it is easy to show that  $\Phi (u,v,\xi)$ is from $K$ to $K$, when $\T_0$ is small enough.

Finally,   we prove the map $\Phi (u,v,\xi)$ is Lipschitz continuous with respect to $(u,v,\xi)$ in $K$. More precisely, we need to prove that the following partial derivatives
\be\label{L1}\frac{\partial P}{\partial u}, \ \frac{\partial P}{\partial v}, \ \frac{\partial P}{\partial \xi}, \ \frac{\partial P_x}{\partial u},\ \frac{\partial P_x}{\partial v},\ \frac{\partial P_x}{\partial \xi}, \frac{\partial Q}{\partial u}, \ \frac{\partial Q}{\partial v}, \ \frac{\partial Q}{\partial \xi}, \ \frac{\partial Q_x}{\partial u},\ \frac{\partial Q_x}{\partial v},\ \frac{\partial Q_x}{\partial \xi}\ee
are uniformly bounded for any $(u,v,\xi)\in K$.  For sake of illustration, we only give the detail estimates for terms including $Q$. Then the estimates for other terms can be obtained similarly.

More precisely,
we will prove that $ \frac{\partial Q}{\partial u}$ and $\frac{\partial Q_x}{\partial u}$ are bounded linear operators from $W^{1,{2\lambda}}(\mathbb{R})$ to $W^{1,{2\lambda}}(\mathbb{R})$;  $\frac{\partial Q}{\partial v}$ and $\frac{\partial Q_x}{\partial v}$ are  bounded linear operators from $L^2(\mathbb{R})\cap L^{\infty}(\mathbb{R})$ to $W^{1,{2\lambda}}(\mathbb{R})$; $\frac{\partial Q}{\partial \xi}$ and $\frac{\partial Q_x}{\partial \xi}$ are  bounded linear operators from $  L^{\infty}(\mathbb{R})$ to $W^{1,{2\lambda}}(\mathbb{R})$. The linearity of these maps are trivial.

To show that these maps are bounded, we will prove that, for any $(u,v,\xi)\in K$, the derivatives \[ \frac{\partial Q}{\partial u}: W^{1,{2\lambda}}(\mathbb{R}) \mapsto L^{2\lambda}(\mathbb{R}),\ \ \    \frac{\partial (\partial_YQ)}{\partial u}: W^{1,{2\lambda}}(\mathbb{R}) \mapsto L^{2\lambda}(\mathbb{R}),\]  \[ \frac{\partial Q}{\partial v}: L^2(\mathbb{R})\cap L^{\infty}(\mathbb{R}) \mapsto L^{2\lambda}(\mathbb{R}), \ \ \ \frac{\partial (\partial_YQ)}{\partial v}: L^2(\mathbb{R})\cap L^{\infty}(\mathbb{R}) \mapsto L^{2\lambda}(\mathbb{R}),\]  \[\frac{\partial Q}{\partial \xi}:  L^{\infty}(\mathbb{R}) \mapsto L^{2\lambda}(\mathbb{R}), \ \ \  \frac{\partial (\partial_YQ)}{\partial \xi}:   L^{\infty}(\mathbb{R}) \mapsto L^{2\lambda}(\mathbb{R})\] are bounded. We will only take $ \frac{\partial Q}{\partial u}$ and $ \frac{\partial (\partial_YQ)}{\partial u}$ as examples. One can treat the other cases using very similar method. Indeed,
$ \frac{\partial Q}{\partial u}$ is a linear operator defined as following. For any $\varphi \in W^{1,{2\lambda}}(\mathbb{R})$,
 \[[\frac{\partial Q}{\partial u}\cdot \varphi](Y)=\frac{{(\lambda-1)} }{4} \int_{-\infty}^{+\infty}e^{-|\int_{Y}^{Y^{'}}\cos^{2\lambda}\frac{v(s)}{2} \cdot\xi(s)ds|} [ \sin^3\frac{v(Y^{'})}{2} \cos^{{2\lambda}-3}\frac{v(Y^{'})}{2}] \xi(Y^{'}) (\lambda-2)u^{{\lambda-3}} \cdot \varphi dY^{'} \]
 Then,
 \[ \|[\frac{\partial Q}{\partial u}\cdot \varphi](Y)\|_{L^{2\lambda}}\leq \frac{{(\lambda-1)}(\lambda-2) C^+}{4} \|g\|_{L^1}\| u\|_{L^{\infty}}^{{\lambda-3}}\|v\|_{L^2}^{{\frac{1}{\lambda}}}\cdot \|\varphi\|_{L^{\infty}}.\]
 From the Sobolev embedding, we see that $\|\varphi\|_{L^{\infty}}\leq \|\varphi\|_{W^{1,{2\lambda}}}$, and
 \[\|\frac{\partial Q}{\partial u} \|\leq \frac{{(\lambda-1)}(\lambda-2) C^+}{4} \|g\|_{L^1}\| u\|_{L^{\infty}}^{{\lambda-3}}\|v\|_{L^2}^{{\frac{1}{\lambda}}}.\]
 $ \frac{\partial (\partial_YQ)}{\partial u}$ is the linear operator defined by $\forall\ \varphi \in W^{1,{2\lambda}}(\mathbb{R})$,
 \[\begin{split}[\frac{\partial (\partial_YQ)}{\partial u}\cdot \varphi](Y)
=&\frac{{(\lambda-1)} }{4} \int_{-\infty}^{+\infty}e^{-|\int_{Y}^{Y^{'}}\cos^{2\lambda}\frac{v(s)}{2} \cdot\xi(s)ds|} \cdot[\cos^{2\lambda}\frac{v(Y^{'})}{2} \cdot\xi(Y^{'})]\ {\rm sign} (Y^{'}-Y)\\
&\qquad \qquad  \cdot[ \sin^3\frac{v(Y^{'})}{2} \cos^{{2\lambda}-3}\frac{v(Y^{'})}{2}]   \xi(Y^{'})   (\lambda-2)u^{{\lambda-3}} \cdot \varphi dY^{'} \end{split}\]
 Then,
 \[ \|[\frac{\partial (\partial_YQ)}{\partial u}\cdot \varphi](Y)\|_{L^{2\lambda}}\leq \frac{{(\lambda-1)}(\lambda-2) (C^+)^2}{4} \|g\|_{L^1}\| u\|_{L^{\infty}}^{{\lambda-3}}\|v\|_{L^2}^{{\frac{1}{\lambda}}}\cdot \|\varphi\|_{L^{\infty}}.\]
 From the Sobolev embedding inequality, we see that $\|\varphi\|_{L^{\infty}}\leq C \|\varphi\|_{W^{1,{2\lambda}}}$, and
 \[\|\frac{\partial (\partial_YQ)}{\partial u} \|\leq \frac{{(\lambda-1)}(\lambda-2) (C^+)^2}{4} \|g\|_{L^1}\| u\|_{L^{\infty}}^{{\lambda-3}}\|v\|_{L^2}^{{\frac{1}{\lambda}}}.\]

Now, applying the standard ODE local existence theory in Banach space, we complete the proof of Proposition \ref{prop_4.1}.
\end{proof}

\renewcommand{\theequation}{\thesection.\arabic{equation}}
\setcounter{equation}{0}
\section{Global existence of semi-linear system \eqref{S-1}-\eqref{S-2}}

In this section, we will  prove that the local solution for the Cauchy problem (\ref{S-1})-(\ref{S-2}) can be extended  to a global one.
We will first prove that there exists an a positive priori bound $C(E_0, \T)$ such that
\be\label{B}\|u(T)\|_{W^{1,{2\lambda}}}+\|v(T)\|_{L^2}+\|v(T)\|_{L^{\infty}}+\|\xi(T)\|_{L^{\infty}}+\|\frac{1}{\xi(T)}\|_{L^{\infty}}\leq C(E_0,J_0(\T)),\ee
for any $\T\geq 0$. Using this a priori bound, we can extend the local solution to $T\in[0,\T]$. Then as $\T$ can be arbitrarily large, we can uniquely extend the solution to any time $T$, so we prove the global existence.

\begin{proposition}\label{pro_5.1} Let $\lambda=1$ or $\lambda\geq2$. If the initial data
$u_0\in W^{1,{2\lambda}}(\mathbb{R})$, then the Cauchy problem (\ref{S-1})-(\ref{S-2}) has a unique solution, defined for all times $T\in \mathbb{R}^+$.
\end{proposition}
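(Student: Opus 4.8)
The plan is to establish global existence by the standard continuation scheme. Proposition \ref{prop_4.1} already gives a unique local solution on some $[0,\T_0]$, and the local existence time there depends only on the size of the data through the constants $A,B,C^-,C^+$ cutting out the set $K$. Hence it suffices to prove the a priori bound \eqref{B} on an arbitrary finite horizon $[0,\T]$, with a constant depending only on $\mathcal E_0$ and $J_0(\T)$ and not on the length of the interval of existence already achieved. Such a bound keeps the solution inside a fixed $K$, so the local result can be reapplied with a uniform step at $t=\T_0,2\T_0,\dots$ until $\T$ is reached; uniqueness on each step is inherited from the contraction mapping and the pieces agree on overlaps. Thus the whole argument reduces to proving \eqref{B}.

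First I would transport the two energy identities of Section 2 into the $(T,Y)$ variables. Using $x_Y=\cos^k\frac v2\,\xi$ from \eqref{T2} and $u_x=\tan\frac v2$, the lower order energy becomes $\int_{\mathbb R}\big(u^2\cos^k\frac v2+\sin^2\frac v2\,\cos^{k-2}\frac v2\big)\xi\,dY$; differentiating this in $T$ and substituting \eqref{S-1} reproduces \eqref{def_E}, so it stays equal to $\mathcal E_0$ and, by Sobolev embedding, gives the time-independent bound $\|u(T)\|_{L^\infty}\le C\sqrt{\mathcal E_0}$. Next I would run the computation \eqref{uxk} in the new coordinates: the quantity $\int_{\mathbb R}\sin^k\frac v2\,\xi\,dY$ equals $\int u_x^k\,dx$, and because the $\xi$-equation in \eqref{S-1} was built from \eqref{E-3}, its $T$-derivative is exactly the balance law \eqref{E-3} integrated against the new measure. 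Bounding the right-hand side with the same two H\"older inequalities \eqref{i2}--\eqref{i3} and applying Gr\"onwall yields $\int_{\mathbb R}\sin^k\frac v2\,\xi\,dY\le J_0(\T)$ on $[0,\T]$.

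With $\|u\|_{L^\infty}$ and $\int\sin^k\frac v2\,\xi\,dY$ controlled, Lemma \ref{le_4.2} applied to the representations \eqref{P1}--\eqref{Q2} upgrades the smooth-solution estimates \eqref{PI1}--\eqref{QI2} to the semilinear solution, giving uniform control of $P,P_x,Q,Q_x$ in $L^\infty$ and in $L^2(dY)$. These make the right-hand sides of the $v$- and $\xi$-equations bounded. Since $\xi_T/\xi$ is a sum of terms each estimated by $\|u\|_{L^\infty}^{\lambda}$, $\|u\|_{L^\infty}^{\lambda+2}$, $\|P\|_{L^\infty}$, $\|Q_x\|_{L^\infty}$ times $|\sin v|\le1$, integration from $\xi(0)=1$ gives $\xi(T)=\exp\!\big(\int_0^T\xi_T/\xi\,d\tau\big)$, hence two-sided bounds on $\|\xi(T)\|_{L^\infty}$ and $\|1/\xi(T)\|_{L^\infty}$. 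The bound on $\|v(T)\|_{L^\infty}$ follows from the uniform pointwise bound on $v_T$ together with the $2\pi$-periodic convention for $v$, and the substantive estimate is for $\|v\|_{L^2}$: I would close $\frac{d}{dT}\|v\|_{L^2}^2=2\int v\,v_T\,dY$ by treating the first resulting integral via $\sin^2\frac v2\le(\tfrac v2)^2$ on $\{|v|\le\tfrac\pi2\}$ and the measure bound \eqref{Measure0} on its complement, which yields a term $\le C\|v\|_{L^2}^2$, and the other two via Cauchy--Schwarz after checking that $\sin v$, $u^{\lambda+2}$ and $P+Q_x$ all lie in $L^2(dY)$ with norms controlled by $\mathcal E_0,J_0(\T)$; Gr\"onwall then bounds $\|v(T)\|_{L^2}$. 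Finally $\|u(T)\|_{W^{1,k}}$ follows from $\|u\|_{L^\infty}$, $\|u\|_{L^2}\le\sqrt{\mathcal E_0}$ and $\int u_x^k\,dx\le J_0(\T)$, completing \eqref{B}.

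The main obstacle is the circular dependence among these a priori bounds: the estimates \eqref{PI1}--\eqref{QI2} depend, through Lemma \ref{le_4.2}, on the very constants $B=\|v\|_{L^2}$ and $C^-=\inf\xi$ that I am trying to bound, while the bounds on $v$ and $\xi$ in turn require control of $P,Q$. I expect to close the loop by a bootstrap/continuity argument on $[0,\T]$: the desired inequalities hold with strict margin at $T=0$; on the maximal subinterval where they persist the Gr\"onwall estimates above are valid and show the controlled quantities remain strictly inside the margin up to time $\T$; hence that subinterval is all of $[0,\T]$. Keeping $\xi$ uniformly positive, i.e. $x_Y>0$ and no degeneration of the change of variables, is the most delicate point, since $\|1/\xi\|_{L^\infty}$ enters every bound through $C^-$ in Lemma \ref{le_4.2}.
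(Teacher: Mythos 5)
Your proposal is correct and follows essentially the same route as the paper: transport the two energy laws to the $(T,Y)$ coordinates via $u_Y=\frac12\xi\sin v\cos^{2\lambda}\frac v2$ and $x_Y=\cos^k\frac v2\,\xi$, use the conserved energy for $\|u\|_{L^\infty}$ and the transported balance law plus the H\"older pair \eqref{i2}--\eqref{i3} for $\int\xi\sin^k\frac v2\,dY\le J_0(\T)$, then bound $P,P_x,Q,Q_x$, $\xi$, $1/\xi$, $v$ and $u$ by Gr\"onwall and continue the local solution. The one genuine divergence is how the circularity you flag at the end is handled. The paper dissolves it by ordering rather than by a bootstrap: the $L^\infty$ bounds on $P,P_x,Q,Q_x$ in \eqref{Energy4}--\eqref{Energy5} use only $e^{-|\cdot|}\le 1$ together with $E_0$ and $J_0(\T)$ and need no convolution-decay lemma, so they already close the ODE for $\xi$ and produce $D_1$; only afterwards is a decay lemma needed for the $L^1$, $L^2$ and $L^k$ estimates, and there the paper replaces Lemma \ref{le_4.2} by Lemma \ref{le_5.2}, whose hypothesis is the conserved quantity $\|\xi\sin^2\frac {v}{2}\cos^{2\lambda}\frac {v}{2}\|_{L^1}\le E_0$ rather than $\|v\|_{L^2}\le B$, so its constant depends only on $E_0$ and $D_1$ and no continuity argument is required. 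Your bootstrap would also work but is heavier. One further small caution: the $W^{1,k}$ norm in \eqref{B} is taken in the $Y$ variable (as it must be for the fixed-point space of Proposition \ref{prop_4.1}), so the claim that it ``follows from $\|u\|_{L^\infty}$, $\|u\|_{L^2}$ and $\int u_x^k\,dx$'' needs the extra observation that $dY=\xi^{-1}(1+u_x^2)^{k/2}\,dx$, whence $\int|u|^k\,dY\le C\,D_1\bigl(\|u\|_{L^\infty}^{k-2}E_0+\|u\|_{L^\infty}^{k}J_0(\T)\bigr)$ and $\int|u_Y|^k\,dY\le D_1^{k-1}J_0(\T)$; the paper instead differentiates $\|u\|_{L^k(dY)}^k$ and $\|u_Y\|_{L^k(dY)}^k$ in $T$ and closes with Gr\"onwall using the $L^1$ bounds supplied by Lemma \ref{le_5.2}.
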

\begin{proof} In Proposition \ref{prop_4.1}, we have proved the existence of local solution for the Cauchy problem (\ref{S-1})-(\ref{S-2}).  The key part of proof for this proposition is to get the a priori estimate (\ref{B}) for any given $\T$.

From now on, we always assume that  $T\in[0,\T]$. In fact, after showing the existence of unique solution in $T\in[0,\T]$ for any $\T>0$, one can uniquely extend the solution to any time $T$, then the proof of this proposition is done.

\smallskip

\paragraph{\bf 1.}
First, we prove the energy conservation and balance laws under the new coordinate $(T,Y)$.  First of all, when $t=0$, from the transformation (\ref{T})-(\ref{T2}), we see that  $Y_x(0)=(1+(u_0)_x^2)^{{\lambda}}$, then
\[u_Y(0,Y)=\frac{(u_0)_x}{(1+(u_0)_x^2)^{{\lambda}}}=\frac12  \sin v(0,Y) \cos^{2{(\lambda-1)}}\frac{v(0,Y) }{2}\ \ {\rm and }\ \ \xi(0,Y)=1.\] We claim that  for all times $T$,
\be\label{uY}u_Y=\frac12 \xi \,\sin v \,\cos^{2{(\lambda-1)}}\frac{v}{2}.\ee
Indeed, from the first identity in (\ref{S-1}), for all times $T$, we have
\[\begin{split}(u_Y)_T&=(u_T)_Y=-(P_x)_Y-Q_Y\\
&=-\frac{P_{xx}+Q_x}{Y_x}\\
&=\xi [(\lambda - \frac 12)u^{{(\lambda-1)}}\sin^2\frac{v}{2}\cos^{2{(\lambda-1)}}\frac v2+u^{\lambda+1}\cos^{2\lambda}\frac v2-\cos^{2\lambda}\frac v2(P+Q_x)].\end{split}\]
It follows from the last two identity in (\ref{S-1}) that for all  times  $T$
\[\begin{split}
&(\frac 12 \xi \sin v \cos^{2{(\lambda-1)}}\frac v2 )_T
=(u_Y)_T.
\end{split}\]
Using the fact  $\xi(0,Y)=1$, we know that (\ref{uY}) is always true.  Next, we compute the  $T-$derivative of the energy $E(T)$, where
\be\label{Energy}E(T)=\int_{\mathbb{R}}(u^2\cos^{2 \lambda } \frac {v}{2}+\sin^2\frac {v}{2}\cos^{2{(\lambda-1)}}\frac {v}{2})\ \xi dY.\ee
It is easy to get that
\[\begin{split}
&\frac{d}{dT}\int_{\mathbb{R}}(u^2\cos^{2 \lambda } \frac {v}{2}+\sin^2\frac {v}{2}\cos^{2{(\lambda-1)}}\frac {v}{2})\ \xi dY\\
=&\int_{\mathbb{R}}\{\frac{ \lambda+2 }{2} u^{\lambda+1}\xi \sin v\cos^{2{(\lambda-1)}} \frac v2-\xi \sin v \cos^{2{(\lambda-1)}} \frac v2 (P+Q_x)\\
&-2u\xi \cos^{2 \lambda } \frac v2 (P_x+Q)+\frac{{(\lambda-1)}}{2}\sin^2 \frac v2 \xi\sin v \cos^{2{\lambda-4}}\frac v2\}dY.
\end{split}\]
By (\ref{uY}), we know that
\[  (u^{ \lambda+2 })_Y=\frac{ \lambda+2 }{2} u^{\lambda+1}\xi \sin v\cos^{2{(\lambda-1)}} \frac v2,\]
Then also using
\[P_Y=P_x\, \xi \cos^{2 \lambda } \frac v2,\]
and
\[(Q_x)_Y=-\frac {{(\lambda-1)}}{2}u^{\lambda-2}\xi \sin^3 \frac v2 \cos^{2\lambda-3} \frac v2+Q\xi \cos^{2 \lambda } \frac v2,\]
we know that
\[2[u(P+Q_x)]_Y=\xi\sin v \cos^{2{(\lambda-1)}}\frac v2(P+Q_x)+2u\xi \cos^{2 \lambda } \frac v2 (P_x+Q)-{(\lambda-1)} u^{{(\lambda-1)}}\xi \sin^3 \frac v2 \cos^{2\lambda-3} \frac v2.\]
So
\[\frac{dE(T)}{dT}=\int_{\mathbb{R}} [u^{ \lambda+2 }-2u(P+Q_x)]_YdY=0.
\]
Therefore, we have proved that the lower order energy conservation law
\be\label{Energy2}E(T)=E(0):=E_0.\ee

Meanwhile, we can derive the higher order energy balance law of $\int \xi \sin^{2\lambda}\frac{v}{2} dY$, which is essentially $\int u_x^{2\lambda} dx$. This balance law under $(T,Y)$ coordinates comes from  \eqref{E-3}. Here
\be\label{19_2}
\begin{split}
&\frac{d}{dT}\int \xi \sin^{2\lambda}\frac{v}{2}\, dY\\
=&\int \xi_T\,\sin^{2\lambda}\frac{v}{2}\, dY+\int \xi{\lambda}v_T\sin^{{2\lambda}-1}\frac{v}{2}\cos\frac{v}{2}\,dY\\
=&\int \left(u^{\lambda+1}-P-Q_x\right)\, {2\lambda}\xi \sin^{{2\lambda}-1}\frac{v}{2}\,
\cos\frac{v}{2}\, d Y.
\end{split}
\ee
\smallskip

\paragraph{\bf 2.}
Then we come to find the $L^\infty$ bounds $\|u(T)\|_{L^{\infty}}$, $\|\xi(T)\|_{L^{\infty}}$, $\|\frac{1}{\xi(T)}\|_{L^{\infty}}$ and  $\|v(T)\|_{L^{\infty}}$. It follows from (\ref{uY}) and (\ref{Energy2}) that
\[\begin{split}\sup\limits_{Y\in \mathbb{R}} |u^2(T,Y)|
&\leq \int_{\mathbb{R}}|(u^2)_Y|dY=\int_{\mathbb{R}} |u \sin v \cos^{2{(\lambda-1)}} \frac v2|\ \xi dY\\
&\leq  2\int_{\mathbb{R}} |u\cos^{\lambda} \frac v2 \sin \frac{v}{2} \cos^{{(\lambda-1)}} \frac v2|\ \xi dY\\
&\leq \int_{\mathbb{R}}(u^2\cos^{2 \lambda } \frac {v}{2}+\sin^2\frac {v}{2}\cos^{2{(\lambda-1)}}\frac {v}{2})\ \xi dY,
\end{split}\]
which implies that $\|u(T)\|_{L^{\infty}}$ has a uniform priori bound $E_0^{\frac 12 }$. More precisely, we have the following estimates:
\be\label{Energy3}\left\{\begin{array}{lll}&\|u(T)\|_{L^{\infty}}\leq E_0^{\frac 12 },\\[4mm]
 &  \|\xi  u^2(T)\cos^{2 \lambda } \frac {v(T)}{2}\|_{L^1}\leq E_0,\\[4mm]
 &  \|\xi \sin^2\frac {v(T)}{2}\cos^{2{(\lambda-1)}}\frac {v(T)}{2} \|_{L^1}\leq E_0.\end{array}\right.\ee

To find a bound on $\xi$, we need to first find bounds on $P$ and $Q_x$.
By the definition of $P$,  $P_x$, $Q$ and $Q_x$ in (\ref{P1})-(\ref{Q2}), we deduce that
\be\label{Energy4}\begin{split} \|P(T)\|_{L^{\infty}}, \|P_x(T)\|_{L^{\infty}}&\leq \| \frac 12 e^{-|x|}\|_{L^{\infty}}(\|u(T)\|_{L^{\infty}}^{{(\lambda-1)}}\|\xi  u^2(T)\cos^{2 \lambda } \frac {v(T)}{2}\|_{L^1}\\
&\qquad +(\lambda - \frac 12)\|u(T)\|_{L^{\infty}}^{{(\lambda-1)}} \|\xi \sin^2\frac {v(T)}{2}\cos^{2{(\lambda-1)}}\frac {v(T)}{2} \|_{L^1})\\
&\leq \frac{2 \lambda+1 }{4} E_0^{\frac{\lambda+1}{2}},
\end{split}\ee
which is exactly the same bound found in \eqref{PI1}.

Similarly we can verify that bounds \eqref{QI1} and \eqref{QI2} of $Q$ and $Q_x$ in Section 2 for smooth solutions still hold for any solutions of \eqref{S-1}-\eqref{S-2}. The only difference in the proof is that now we need to use  \eqref{19_2} which comes from the balance law \eqref{E-3} to show that $\int \xi \sin^{2\lambda}\frac{v}{2} dY$, which is essentially $\int u_x^{2\lambda} dx$, has a upper bound depending on $\T$ i.e.
\be\label{j0t}
\int u_x^{2\lambda}\, dx\leq J_0(\T),
\ee
for some constant $J_0(\T)$.

More precisely, inequalities \eqref{i1}-\eqref{i3} now read as
\[
(\int \xi |\sin^3\frac{v}{2}\cos^{{2\lambda}-3}\frac{v}{2}|\, dx)\, (\int \xi |\sin^{{2\lambda}-1}\frac{v}{2}\cos\frac{v}{2}|\, dx)
\leq(\int \xi \sin^2\frac{v}{2}\cos^{{2\lambda}-2}\frac{v}{2}\, dx)\, (\int \xi \sin^{2\lambda}\frac{v}{2}\, dx)
\]
because of H\"older inequalities
\[
\int \xi |\sin^3\frac{v}{2}\cos^{{2\lambda}-3}\frac{v}{2}|\, dx
\leq(\int \xi \sin^2\frac{v}{2}\cos^{{2\lambda}-2}\frac{v}{2}\, dx)^\frac{{2\lambda}-3}{{2\lambda}-2}\, (\int \xi \sin^{2\lambda}\frac{v}{2}\, dx)^{\frac{1}{{2\lambda}-2}}
\]
and
\[
(\int \xi |\sin^{{2\lambda}-1}\frac{v}{2}\cos\frac{v}{2}|\, dx)
\leq(\int \xi \sin^2\frac{v}{2}\cos^{{2\lambda}-2}\frac{v}{2}\, dx)^\frac{1}{{2\lambda}-2}\, (\int \xi \sin^{2\lambda}\frac{v}{2}\, dx)^{\frac{{2\lambda}-3}{{2\lambda}-2}}.
\]
We leave details to the reader since the argument is as same as the one in Section 2.
Now we know that
 \be\label{Energy5}\|Q(T)\|_{L^{\infty}},\ \  \|Q_x(T)\|_{L^{\infty}}\leq C_3(E_0,J_0(\T)),\ee
 for some positive constant $C_3(E_0,J_0(\T))$ and for any $T\in[0,\T]$.

Similarly, we can show that \eqref{PI2} and \eqref{QI2} also hold for any solutions of \eqref{S-1}-\eqref{S-2}, which tells us that $\|P\|_{L^2}$ and $\|Q_x\|_{L^2}$ are bounded when $T\in[0,\T]$. In fact, these bounds can be proved using  Lemma \ref{le_5.2} and the $L^\infty$ bounds of $P$ and $Q_x$

Plugging (\ref{Energy4}) and (\ref{Energy5}) into the last equation in (\ref{S-1}), we get
\[|\xi_T|\leq C_5(E_0,J_0(\T))\]
 for some positive constant $C_5(E_0,J_0(\T))$. Using the initial condition $\xi(0,Y)=1$, we have
\be\label{Bound10} \frac{1}{D_1}\leq \xi (T)\leq D_1,\ee  for some positive constant $D_1=D_1(E_0,J_0(\T))$ depending on $E_0$ and $J_0(\T)$.

By the secondly equation in (\ref{S-1}),
we have that
\[\frac{d}{dT}\|v(T)\|_{L^{\infty}}\leq \|u(T)\|_{L^{\infty}}^{{(\lambda-1)}}+2 \|u(T)\|_{L^{\infty}}^{\lambda+1}+\|P+Q_x\|_{L^{\infty}}\leq D_2\]
for some constant $D_2=D_2(E_0,J_0(\T))$. Hence,
\be\label{Bound1}  \|v(T)\|_{L^{\infty}} \leq e^{D_2 T}.\ee

\paragraph{\bf 3.}
Now we estimate $\|u(T)\|_{W^{1,{2\lambda}}}$. By the first equation of \eqref{S-1}, we have
\be\label{1}\begin{split} \frac {d}{dT}\| u(T)\|_{L^{2 \lambda }}^{2 \lambda }&=2 \lambda \int u^{2\lambda-1} u_TdY\\
  &\leq 2 \lambda  \|u(T)\|_{L^{\infty}}^{2\lambda-1}(\|P_x\|_{L^1}+ \| Q\|_{L^1}),
  \end{split}\ee
and
\be\label{2}\begin{split}\frac {d}{dT}\| (u(T))_Y\|_{L^{2 \lambda }}^{2 \lambda }& =2 \lambda \int u_Y^{2\lambda-1} (u_T)_YdY\\
& \leq  \frac{\lambda}{2^{2{(\lambda-1)}}} \|\xi\|_{L^{\infty}}^{2\lambda-1}(\|(P_x)_Y\|_{L^1}+ \|Q_Y\|_{L^1}).\\
\end{split}\ee
Now introduce a technical lemma very similar to Lemma \ref{le_4.2}, where the proof can be found in the Appendix B.
\begin{lemma}\label{le_5.2} Let $f\in L^q$ with $q\geq 1$. If $\|\xi \sin^2\frac {v(T)}{2}\cos^{2{(\lambda-1)}}\frac {v(T)}{2} \|_{L^1}\leq E_0$, then
\be\label{Singular2} \|\int_{-\infty}^{+\infty}  e^{-|\int_{Y}^{Y^{'}}\cos^{2\lambda}\frac{v(s)}{2} \cdot\xi(s)ds|} f(Y^{'})d Y^{'}\|_{L^q}\leq  \|h*f\|_{L^q}\leq\|h\|_{L^1} \ \|f\|_{L^q},\ee where $h(z)=\min\{1, e^{\frac{\alpha}{D_1}(  \frac{18  \ 4^{(\lambda-1)}}{( 2-\sqrt{3} )^{{(\lambda-1)}}}D_1E_0-|z|)}\}$ and $\|h\|_{L^1}=\frac{36  \ 4^{(\lambda-1)}}{( 2-\sqrt{3} )^{{(\lambda-1)}}}D_1E_0+\frac{2D_1}{\alpha}$.
\end{lemma}
Using (\ref{Singular2}), we know that
\[\begin{split}\|P_x\|_{L^1}&\leq \frac12 \|h\|_{L^1}\|(\lambda - \frac 12)\xi u^{{(\lambda-1)}} \sin^2\frac {v(T)}{2}\cos^{2{(\lambda-1)}}\frac {v(T)}{2}+\xi u^{\lambda+1}\cos^{2 \lambda } \frac  {v(T)}{2} \|_{L^1}\\
&\leq \frac12   \|h\|_{L^1} (\|u(T)\|_{L^{\infty}}^{{(\lambda-1)}}\|\xi  u^2(T)\cos^{2 \lambda } \frac {v(T)}{2}\|_{L^1}\\
&\qquad +\frac{2\lambda-1}{4}\|u(T)\|_{L^{\infty}}^{{(\lambda-1)}} \|\xi \sin^2\frac {v(T)}{2}\cos^{2{(\lambda-1)}}\frac {v(T)}{2} \|_{L^1})\\
&\leq\frac{2 \lambda+1}{4} E_0^{\frac{\lambda+1}{2}} \|h\|_{L^1},
\end{split}\]

\[\begin{split}\|(P_x)_Y\|_{L^1}&\leq\frac12 \|(\lambda - \frac 12)\xi u^{{(\lambda-1)}} \sin^2\frac {v(T)}{2}\cos^{2{(\lambda-1)}}\frac {v(T)}{2}+\xi u^{\lambda+1}\cos^{2 \lambda } \frac  {v(T)}{2} \|_{L^1}\\
&\qquad + \frac12 \|\xi \cos^{2 \lambda } \frac {v(T)}{2} P \|_{L^1}\\
&\leq \frac12  (1+\|\xi\|_{L^{\infty}} \|h\|_{L^1})\\
&\qquad \cdot\|(\lambda - \frac 12)\xi u^{{(\lambda-1)}} \sin^2\frac {v(T)}{2}\cos^{2{(\lambda-1)}}\frac {v(T)}{2}+\xi u^{\lambda+1}\cos^{2 \lambda } \frac  {v(T)}{2} \|_{L^1}\\
&\leq  \frac{2 \lambda+1 }{4} (1+D_1 \|h\|_{L^1})E_0^{\frac{\lambda+1}{2}} ,
\end{split}\]

\[\| Q\|_{L^1}\leq\frac{{(\lambda-1)}}{4}   \|u(T)\|_{L^{\infty}}^{ \lambda-2}\|e^{-|\int_{Y}^{Y^{'}} \cos^{2 \lambda }\frac v2 \cdot \xi ds|} \xi(T)\|_{L^{1}}\leq\frac{{(\lambda-1)}}{4} E_0^{\frac{\lambda-2}{2}}\|h\|_{L^1},
\]

\[\| Q_Y\|_{L^1}\leq \|\xi \ \cos^{2 \lambda }\frac v2 Q_x\|_{L^1}
\leq \frac{{(\lambda-1)} D_1}{4} E_0^{\frac{\lambda-2}{2}}\|h\|_{L^1}.
\]
Using above four estimates and (\ref{1})-(\ref{2}), we know that
\be\label{3}  \frac {d}{dT}\| u(T)\|_{L^{2 \lambda }}^{2 \lambda } \leq
2 \lambda E_0^{(\lambda - \frac 12)}\|h\|_{L^1}(\frac{2 \lambda+1 }{2}E_0^{\frac{ \lambda+1}{2}}+\frac{{(\lambda-1)} }{4} E_0^{\frac{\lambda-2}{2}}),\ee
\be\label{4}\begin{array}{lll} \frac {d}{dT}\| (u(T))_Y\|_{L^{2 \lambda }}^{2 \lambda } \leq
\frac{ \lambda D_1^{2\lambda-1}}{2^{2{(\lambda-1)}}}[\frac{2 \lambda+1 }{2}(1+D_1\|h\|_{L^1})E_0^{\frac{\lambda+1}{2}}+ \frac{{(\lambda-1)} D_1}{4} E_0^{\frac{\lambda-2}{2}}\|h\|_{L^1}].
  \end{array}\ee
 Then,  from (\ref{Singular2}) in Lemma 5.2, we can deduce that $\| u(T)\|_{L^{2 \lambda }}^{2 \lambda }$ and $\| (u(T))_Y\|_{L^{2 \lambda }}^{2 \lambda }$ are bounded on any bounded interval of time $T$.    That is, there exists a constant $D_3$ (depends on ${\lambda}$, $\|h\|_{L^1}$, $E_0$ and $D_1$) such that
\be\label{Bound3}  \|u(T)\|_{W^{1,{2\lambda}}}  \leq D_3.\ee

\paragraph{\bf 4.}
Finally, we estimate $\|v(T)\|_{L^{2}}$.
\[\begin{split}&\frac{d}{dT}\|v(T)\|_{L^{2}}^2\\
=&2\int_{\mathbb{R}} (-u^{{(\lambda-1)}}\sin^2\frac v2+2u^{\lambda+1}\cos^2\frac v2-2\cos^2\frac v2(P  +Q_x) v dY\\
\leq &\frac 12  \|u(T)\|_{L^{\infty}}^{{(\lambda-1)}} \|v(T)\|_{L^{\infty}}^{{(\lambda-1)}} \|v(T)\|_{L^{2}}^2+4 \|u(T)\|_{L^{\infty}}
\| u(T)\|_{L^{2 \lambda }}^{\lambda} \|v(T)\|_{L^{2}} +4 \|P+Q_x\|_{L^2}\|v(T)\|_{L^{2}}.
\end{split}\]
As we know that $\|u(T)\|_{L^{\infty}}$,  $ \|v(T)\|_{L^{\infty}}$,  $\| u(T)\|_{L^{2 \lambda }}$, $\|P\|_{L^2}$ and $\|Q_x\|_{L^2}$ are bounded when $T\in[0,\T]$. Then the bound of $ \|v(T)\|_{L^{2}}^2$ can be obtained by Gronwall's inequality.

\smallskip
\paragraph{\bf 5.}
Now we have proved that there exists some positive constant $C(E_0,J_0(\T))$ such as \eqref{B} holds. Then using a standard fixed point argument as in \cite{BC2}, one can uniquely extend the local solution to $[0,\T]$ for any $\T$, which completes the proof of this proposition. We refer the reader to \cite{BC2} for more details.
\end{proof}

\renewcommand{\theequation}{\thesection.\arabic{equation}}
\setcounter{equation}{0}
\section{Global existence of the original  equation}

We now show that the global solution of the semi-linear system (\ref{S-1}) found in the previous section yields a global conservative solution to equation (\ref{E}), in the original $(t,x)$ coordinates. We only have to prove this result when $t=T\in[0,\T]$ for any $\T>0$. We will finally finish the proof of Theorem \ref{main}.

  \begin{proof}
We start from a global solution $(u,v,\xi)$ to (\ref{S-1}) obtained in Proposition \ref{pro_5.1}. We define $t$ and $x$ as functions of $T$ and $Y$, where $t=T$ and
\be\label{5.1}x(T,Y):=x_0(Y)+\int_0^Tu^{\lambda}(\tau,Y)d\tau.\ee
For each fixed $Y$, the function $T\mapsto x(T,Y)$ thus provides a solution to the Cauchy problem
\[\frac{d}{dT} x(T,Y)=u^{\lambda}(T,x(T,Y)), \ \ \ x(0,Y)=x_0(Y).\]

In order to define $u$ as a function of the original variables $t$, $x$, we should formally invert the map $(T,Y)\mapsto  (t,x)$ and write
\be\label{5.2}u(t,x):=u(T,Y(t,x)),\quad \hbox{with}\ t=T.\ee We will prove that $u(t,x)$ is a weak conservative solution of (\ref{E0})(\ref{Ei}) under Definition \ref{def1}.

Since the map $(T,Y)\mapsto  (t,x)$  is not one to one,
 we first need to  prove that the function $u=u(t,x)$ is well-defined, i.e. if $x(T^*,Y_1)=x(T^*,Y_2)$, then $u(T^*,Y_1)=u(T^*,Y_2)$. By (\ref{Energy3}), we know
$|u(t,Y)|^{\lambda}\leq E_0^{\frac{\lambda}{2}}$. And by (\ref{5.1}), we get
\[x_0(Y)-E_0^{\frac{\lambda}{2}}t\leq x(t,Y)\leq x_0(Y)+E_0^{\frac{\lambda}{2}}t.\]
By  $Y=\int_0^{x_0(Y)} (1+(u_0)_x^2)^{{\lambda}}dx$, we have $\lim\limits_{Y\rightarrow \pm\infty} x_0(Y) =\pm\infty$, which yields that the image of the continuous map $(t,Y)\mapsto(t,x(t,Y))$
is the entire half plane $(t,x)\in\mathbb{R}^+\times \mathbb{R}$. On the other hand, we claim that
\be\label{5.4} x_Y=\cos^{{2\lambda}} \frac{v}{2} \cdot \xi \ee for any $t\geq 0$ and a.e. $Y\in \mathbb{R}$.
Indeed, from the last two equations in (\ref{S-1}), we know that
\[\begin{split}\frac{d}{dT} (\cos^{{2\lambda}} \frac{v}{2} \cdot \xi)
=&\frac{\lambda}{2}u^{{(\lambda-1)}} \xi \sin v \cos^{{2\lambda}-2} \frac v2
= \lambda u^{{(\lambda-1)}} u_Y.
\end{split}\]
Differentiating (\ref{5.1}) with respect to $T$ and $Y$, we know that
\[ \frac{d}{dT} x_Y=\frac{d}{dY} x_T= \lambda u^{{(\lambda-1)}} u_Y=\frac{d}{dT} (\cos^{{2\lambda}} \frac{v}{2} \cdot \xi).\]
Moreover, from the fact that $x\mapsto 2\arctan (u_0)_x(x)$ is measurable, we know that the claim (\ref{5.4})  is true for almost every $Y\in \mathbb{R}$ at $T=0$. Then, (\ref{5.4}) remains true for all times $T\in \mathbb{R}^+$ and a.e. $Y\in \mathbb{R}$.  Thus, for any $Y_1\neq Y_2$ (without loss of generality, we assume $Y_1<Y_2$), if $x(T^*, Y_1)=x(T^*, Y_2)$, then, by the monotonicity of $x(T,Y)$ on $Y$,
for any $Y\in [Y_1,Y_2]$, we have $x(T^*, Y )=x(T^*, Y_1)$. And,
by (\ref{5.4}) we get
\[0=x(T^*, Y_1)-x(T^*, Y_2)=\int_{Y_1}^{Y_2} x_Y(T^*,Y)dY=\int_{Y_1}^{Y_2} \cos^{{2\lambda}} \frac{v(T^*,Y)}{2} \cdot \xi(T^*,Y)dY.\]
Then, $\cos \frac{v(T^*,Y)}{2} = 0$ for almost any $Y\in [Y_1,Y_2]$, which tells that, also by  (\ref{uY}).
\[\begin{split}u(T^*, Y_1)-u(T^*, Y_2)&=\int_{Y_1}^{Y_2} u_Y(T^*,Y)dY\\
&=\int_{Y_1}^{Y_2}  \xi(T^*,Y) \sin \frac{v(T^*,Y)}{2} \cos^{2\lambda-1} \frac {v(T^*,Y)}{2}dY=0,\end{split}\]
where we use \eqref{uY}.
This proves that the map $(t,x)\mapsto u(t(T),x(T,Y))$ defined by (\ref{5.2}) is well defined for all $(t,x)\in \mathbb{R}^+\times\mathbb{R}$.

Secondly, we prove the regularity of $u(t,x)$ and energy equation.
By  (\ref{Energy2}), we know that, when $\lambda\geq 2$,
\be\label{5.7}\begin{split}
\mathcal{E}(0)&=E(0)=E(T)\\
=&\int_{\mathbb{R}}[u^2(T,Y)\cos^{2 \lambda } \frac {v(T,Y)}{2}+\sin^2\frac {v(T,Y)}{2}\cos^{2{(\lambda-1)}}\frac {v(T,Y)}{2}]\ \xi (T,Y)\,dY\\
=&\int_{\{\mathbb{R}\cap \cos\frac{v}{2}\neq 0\}}[u^2(T,Y)\cos^{2 \lambda } \frac {v(T,Y)}{2}+\sin^2\frac {v(T,Y)}{2}\cos^{2{(\lambda-1)}}\frac {v(T,Y)}{2}]\ \xi (T,Y)\,dY\\
=& \int_{\mathbb{R}} (u^2(t,x)+u_x^2(t,x))dx= \mathcal{E}(T).\end{split}\ee

When $\lambda=1$ (Camassa-Holm), we only can show $\mathcal{E}(T)\leq  \mathcal{E}(0)$, since $\int_{\mathbb{R}}\sin^2\frac {v(T,Y)}{2} \xi (T,Y)dY$ might be larger than 
 $\int_{\{\mathbb{R}\cap \cos\frac{v}{2}\neq 0\}}\sin^2\frac {v(T,Y)}{2} \xi (T,Y)dY$. Furthermore,  $\mathcal{E}(T)=  \mathcal{E}(0)$ for almost any time $t\geq 0$,  where the proof can be found in \cite{BC2}, or directly from the fact that the measure $\mu_{(t)}$ is absolutely continuous with respect to Lebesgue measure for almost any time when $\lambda=1$, which will be proved later.

Now, applying the Sobolev inequality and also using (\ref{j0t}), we get $$\|u\|_{C^{0,1-\frac{1}{{2\lambda}}}}\leq C \|u\|_{W^{1,{2\lambda}}}\leq J_0(\T),$$ which implies that for any $t$, $u(t,\cdot)$ is   uniformly H\"{o}lder continuous  on $x$ with exponent $1-\frac{1}{{2\lambda}}$. On the other hand, it follows from the first equation in (\ref{S-1}), (\ref{Energy4}) and (\ref{Energy5})  that $$\|u_T\|_{L^{\infty}}\leq C(\|P_x\|_{L^{\infty}}+\|Q\|_{L^{\infty}})\leq C(E_0).$$ Then, the map $t\mapsto  u(t,x(t))$ is uniformly Lipschitz continuous along every characteristic curve $t\mapsto x(t)$. Therefore, for any $(t,x)\in {\mathbb R}^+\times\mathbb R$, $u=u(t,x)$ is  H\"{o}lder continuous with respect to $t$ and $x$  with exponent $1-\frac{1}{{2\lambda}}$.

Thirdly, we prove that the $L^{2\lambda}(\mathbb{R})$-norm of $u(t)$  is Lipschitz continuous with respect to $t$.
Denote $[\tau, \tau+h]$ be any small interval.  For a given point $(\tau,\bar x)$, we choose the
characteristic $t\mapsto x(t,Y)$ $:\{T\rightarrow x(T,Y)\}$ passes through the point $(\tau, \bar x)$, i.e. $x(\tau) =\bar x$. Since
the characteristic speed $u^{\lambda}$ satisfies $\|u^{\lambda}\|_{L^{\infty}}\leq CE_0^{\frac{\lambda}{2}}$, we have the following estimate.
\[\begin{split}&|u(\tau+h,\bar x)-u(\tau,\bar x)|\\
\leq& |u(\tau+h,\bar x)-u(\tau+h,x(\tau+h,Y))|+|u(\tau+h,x(\tau+h,Y))-u(\tau,\bar x)|\\
\leq & \sup\limits_{|y-x|\leq E_0^{\frac{\lambda}{2}}h} |u(\tau+h,y)-u(\tau+h,\bar x)| +\int_{\tau}^{\tau+h} |P_x(t,Y)+Q(t,Y)|dt
\end{split}\]
Then,   we use the following inequality, $\forall \ {\lambda}\geq 1 $, \[ (\int_a^b f(x)dx )^{2\lambda}=\int_a^b f(x)dx\cdot \int_a^b f(x)dx \cdots \int_a^b f(x)dx   \leq (b-a)^{{2\lambda}-1}\|f\|_{L^{2\lambda}}^{2\lambda},\]
and deduce that \be\label{L2}\begin{split}
&\int_{\mathbb{R}}|u(\tau+h,\bar x)-u(\tau,\bar x)|^{2\lambda}dx\\
\leq & C2^{{2\lambda}-1}  \int_{\mathbb{R}}(\int_{\bar x-E_0^{\frac{\lambda}{2}}h}^{\bar x+E_0^{\frac{\lambda}{2}}h} |u_x(\tau+h,y)|dy)^{2\lambda}dx\\
&\qquad \quad +2^{{2\lambda}-1}\int_{\mathbb{R}}(\int_{\tau}^{\tau+h} |P_x(t,Y)+Q(t,Y)|dt)^{2\lambda}\cos^{{2\lambda}} \frac{v(t,Y)}{2} \cdot \xi(t,Y)dY\\
\leq &C2^{4\lambda-1}E_0^{\lambda^2}h^{{2\lambda}}  \|u_x(\tau+h,y)\|_{L^{2\lambda}}^{2\lambda}
+2^{{2\lambda}-1}h^{{2\lambda}-1}\|\xi(\tau)\|_{L^\infty} \int_{\tau}^{\tau+h}\|P_x +Q \|_{L^{2\lambda}}^{2\lambda}dt\\
\leq & C(E_0)h^{2\lambda}.
\end{split}\ee
Then,
this implies that the map $t\mapsto u(t)$ is Lipschitz continuous, in terms of the $x$-variable in $L^{2\lambda}(\mathbb{R})$.

Fourthly, we prove  that the function $u$ provides a weak solution of (\ref{E}). Now, denote
\[\Gamma:=[0,+\infty)\times \mathbb{R},\ \ \ \ \  \ \ \ \bar \Gamma :=\Gamma\cap\{(T,Y)|\cos \frac {v(T,Y)}{2}\neq 0\}.\]
We remark the fact that  by (\ref{uY}), we get $u_Y=0$ when $\cos \frac {v(T,Y)}{2}= 0$, which is given by (\ref{5.7}). Thus, for any test function $\phi(t,x)\in C_c^1(\Gamma)$, the first equation of (\ref{S-1}) has the following weak form.
\[\begin{split}0&=\int\int_{\Gamma}u_{TY}\phi  +[-(\lambda - \frac 12)u^{{(\lambda-1)}}\sin^2 \frac v2\cos^{{2\lambda}-2}\frac v2-u^{\lambda+1}\cos^{2\lambda} \frac v2+(P+Q_x)\cos^{2\lambda}\frac v2]\xi \phi\, dYdT\\
&=\int\int_{\Gamma} -u_{Y}\phi_T +[-(\lambda - \frac 12)u^{{(\lambda-1)}}\sin^2 \frac v2\cos^{{2\lambda}-2}\frac v2-u^{\lambda+1}\cos^{2\lambda} \frac v2+(P+Q_x)\cos^{2\lambda}\frac v2]\xi \phi \, dYdT\\
&\qquad+\int_{\mathbb{R}}(u_0)_x \phi(0,x)\, dx\\
&=\int\int_{\bar\Gamma} -u_{Y}\phi_T +[-(\lambda - \frac 12)u^{{(\lambda-1)}}\sin^2 \frac v2\cos^{{2\lambda}-2}\frac v2-u^{\lambda+1}\cos^{2\lambda} \frac v2+(P+Q_x)\cos^{2\lambda}\frac v2]\xi \phi \, dYdT\\
&\qquad+\int_{\mathbb{R}}(u_0)_x \phi(0,x)\, dx\\
&=\int\int_{ \Gamma} -u_{x}\phi_T +[-(\lambda - \frac 12)u^{{(\lambda-1)}}u_x^2-u^{\lambda+1} + P+Q_x]  \phi \,dxdt+\int_{\mathbb{R}}(u_0)_x \phi(0,x)\, dx\\
&=\int\int_{ \Gamma} -u_{x}(\phi_t+u^{\lambda}\phi_x)+[-(\lambda - \frac 12)u^{{(\lambda-1)}}u_x^2-u^{\lambda+1} + P+Q_x]  \phi \,dxdt+\int_{\mathbb{R}}(u_0)_x \phi(0,x)\, dx\\
\end{split}\]
which proves (\ref{nv_weak}), i.e. $u$ is a weak solution of \eqref{E0}.

Now, we introduce the Radon measures $\{\mu_{(t)}, t\in \mathbb{R}^+\}$: for any Lebesgue measurable set $\{x\in \mathcal{A}\} $ in $\mathbb{R}$, supposing the corresponding preimage set of the transformation is $\{ Y\in \mathcal{G}(\mathcal{A})\}$, we have
\[\mu_{(t)} (\mathcal{A})=\int_{ \mathcal{G}(\mathcal{A})} \xi \sin^{2\lambda}\frac v2 (t,Y)dY.\]
Note for each $t$, $u(t,\cdot)\in W^{1,{2\lambda}}$, the set $\{x\in \mathbb{R}|\cos\frac v2(t,x)=0\}$ is a Lebesgue measure zero set on $x$. Hence, for every $t\in \mathbb{R}^+$, the absolutely continuous part of $\mu_{(t)}$ w.r.t. Lebesgue measure has density $u_x^{2\lambda}(t,\cdot)$ by (\ref{5.4}).

It follows from (\ref{S-1}) that for any test function $\phi(t,x)\in C_c^1(\Gamma)$,
\[\begin{split}
&-\int_{\mathbb{R}^+}\Big\{\int (\phi_t+u^{\lambda}\phi_x)d\mu_{(t)}\Big\}dt\\
=&-\int\int_{\Gamma}\phi_T\xi\sin^{2\lambda}\frac v2dYdT\\
=& \int\int_{\Gamma}\phi (\xi\sin^{2\lambda}\frac v2)_TdYdT-\int_{\mathbb{R}}(u_0)^{2\lambda}_x \phi(0,x)\, dx\\
=& \int\int_{\bar\Gamma}{2\lambda}\phi\xi (u^{\lambda+1}-P-Q_x)\cos \frac v2 \sin^{{2\lambda}-1}\frac v2dYdT-\int_{\mathbb{R}}(u_0)^{2\lambda}_x \phi(0,x)\, dx\\
=& \int\int_{\Gamma}[{2\lambda}u^{\lambda+1}u_x^{{2\lambda}-1}-{2\lambda}u_x^{{2\lambda}-1}(P+Q_x)] \phi\, dxdt-\int_{\mathbb{R}}(u_0)^{2\lambda}_x \phi(0,x)\, dx.\\
\end{split}\]
Then, (\ref{weak_en}) is true. Moreover, in terms of the proof in \cite{BC2}, we see that for almost $t\in \mathbb{R}^+$, the singular part of  $\mu_{(t)}$ concentrates  on $u=0$. Indeed, if the solution blows up, then $|u_x|\rightarrow +\infty$, which means $\cos \frac v2=0$. Then, from the second identity in (\ref{S-1}), we get $v_T=-u^{(\lambda-1)}$, which is nonzero (hence $v=\pi$ is transversal): when $\lambda\geq 2$ and $u$ is nonzero; or when $\lambda=1$. 

Hence, by a similar proof as in \cite{BC2,CCL}, we can prove that for almost every $t\in \R^+$, the singular part of $\mu_{(t)}$: is concentrated on the set where $u=0$ when $\lambda\geq 2$; or has zero measure when $\lambda= 1$, i.e. $\mu_{(t)}$ is absolutely continuous with respect to the Lebesgue measure.

Finally, let $u_{0,n}$ be a sequence of initial data converging to $u_0$ in $H^1(\mathbb{R})\cap W^{1,{2\lambda}}(\mathbb{R})$. From (\ref{2.1}) and (\ref{S-2}), at time $t=0$ this implies
\[\sup_{Y\in \mathbb{R}}|x_n(0,Y)-x(0,Y)|\rightarrow 0,\ \   \ \  \sup_{Y\in \mathbb{R}}|u_n(0,Y)-u(0,Y)|\rightarrow 0, \]
Meanwhile, $\|v_n(0,\cdot)-v(0,\cdot)\|_{L^{2\lambda}}\rightarrow 0$. This implies that $u_n(T,Y)\rightarrow u(T,Y)$, uniformly for $T,Y$ in bounded sets. Returning to the original coordinates, this yields the convergence
\[x_n(T,Y)\rightarrow x(T,Y),\ \ \ \  u_n(t,x)\rightarrow u(t,x),\]
uniformly on bounded sets, because all functions $u$, $u_n$ are uniformly H\"{o}lder  continuous.

\end{proof}

\section*{Acknowledgments}
The first author is partially supported by NSF with grants DMS-1715012 and DMS-2008504.

\appendix
\section{ Proof of Lemma \ref{le_4.2}}
\begin{proof}
 Indeed, when $|v(Y)|\leq \frac{3\pi}{2}$,  we see that
\be\label{Singular-1}|\frac{v(Y)}{2}|\leq \frac{3\pi}{4},\ \  \sin^2\frac{v(Y)}{2}\leq (\frac{v(Y)}{2})^2\leq \frac{9\pi^2}{8}\sin^2\frac{v(Y)}{2}.\ee
Thus, when $|\frac{v(Y)}{2}|\geq \frac{\pi}{4}$, we have $\sin^2\frac{v(Y)}{2}\geq \frac{8}{9\pi^2} |\frac{v(Y)}{2}|^2\geq \frac{1}{18}$. And
for any $(u,v,\xi)\in K$, we deduce that
\be\label{Measure0}\begin{split}& {\rm meas}\{Y\in \mathbb{R}|\ |\frac{v(Y)}{2}|\geq \frac{\pi}{4}  \}\\
\leq & {\rm meas} \{Y\in \mathbb{R}| \sin^2\frac{v(Y)}{2}\geq \frac{1}{18}\}\\
\leq& 18 \int_{\{Y\in \mathbb{R}| \sin^2\frac{v(Y)}{2}\geq \frac{1}{18}\}} \sin^2\frac{v(Y)}{2}dY\\
\leq&   \frac{9  }{2}\|v\|_{L^2}^2\leq  \frac{9  }{2}B^2.
\end{split}\ee
For any $z_1<z_2$, we can show that,
\[\begin{split}\int_{z_1}^{z_2}\cos^{2\lambda}\frac{v(z)}{2} \cdot\xi(z)dz
\geq \frac{ C^-}{2^{\lambda}}\big(|z_2-z_1|- \int_{\{ z\in[z_1,z_2]|\  |\frac{v(Y)}{2}|\geq \frac{\pi}{4} \}}dz\big)\geq  \frac{ C^-}{2^{\lambda}}\big(|z_2-z_1|-  \frac{9 }{2}B^2 \big).\end{split} \]
Therefore, for every $q\geq 1$, we see that
\[ |\int_{-\infty}^{+\infty}  e^{-|\int_{Y}^{Y^{'}}\cos^{2\lambda} \frac{v(s)}{2} \cdot\xi(s)ds|} f(Y^{'})d Y^{'}|\leq |g*f(Y)|.\]
Then (\ref{Singular}) follows from the Young inequality. Moreover, we get that
\[\|g\|_{L^1}=\int_{-  \frac{9   }{2 }B^2 }^{ \frac{9  }{2 }B^2 } 1 dz+\int_{ \frac{9  }{2}B^2}^{+\infty} e^{  \frac{C^-}{2^{\lambda}}(\frac{9   }{2 }B^2- z) }dz  +\int^{- \frac{9 }{2 }B^2}_{-\infty} e^{  \frac{C^-}{2^{\lambda}}( \frac{9   }{2 }B^2+ z) }dz= 9B^2+\frac{2^{\lambda+1}}{ C^-}.\]
\end{proof}

\section{ Proof of Lemma \ref{le_5.2}}
\begin{proof}

As in Lemma 3.1,when $\frac{\pi}{4}\leq |\frac{v(Y)}{2}|\leq  \frac{5\pi}{12}  $, we deduce that $\cos \frac{5\pi}{12}=\frac{\sqrt{6}-\sqrt{2}}{4}$ and for ${(\lambda-1)}\in \mathbb{Z}^{+}$
\[\cos^{2{(\lambda-1)}} \frac{v(Y)}{2}\sin^2\frac{v(Y)}{2}\geq \frac{8}{9\pi^2} (\frac{v(Y)}{2})^2(\frac{2-\sqrt{3}}{4})^{{(\lambda-1)}}\geq\frac{( 2-\sqrt{3} )^{{(\lambda-1)}}}{18\ 4^{(\lambda-1)}}.\]
Then we obtain the following measure estimation.
\[\begin{split}& {\rm meas}\{Y\in \mathbb{R}|\frac{\pi}{4}\leq |\frac{v(Y)}{2}|\leq \frac{5\pi}{12}\}\\
\leq & {\rm meas} \{Y\in \mathbb{R}|\cos^{2{(\lambda-1)}} \frac{v(Y)}{2}\sin^2\frac{v(Y)}{2}\geq \frac{( 2-\sqrt{3} )^{{(\lambda-1)}}}{18 \ 4^{(\lambda-1)}}\}\\
\leq& \frac{18\ 4^{(\lambda-1)}}{( 2-\sqrt{3} )^{{(\lambda-1)}}}\int_{\{Y\in \mathbb{R}|\cos^{2{(\lambda-1)}} \frac{v(Y)}{2}\sin^2\frac{v(Y)}{2}\geq \frac{( 2-\sqrt{3} )^{{(\lambda-1)}}}{18\ 4^{(\lambda-1)}}\}}D_1\ \xi  \cos^{2{(\lambda-1)}} \frac{v(Y)}{2}\sin^2\frac{v(Y)}{2}dY\\
\leq&   \frac{18  \ 4^{(\lambda-1)}}{( 2-\sqrt{3} )^{{(\lambda-1)}}}D_1E_0.
\end{split}\]
 For any $z_1<z_2$, we deduce that  there exists $\alpha>0$ such that
\[\begin{split}\int_{z_1}^{z_2}\cos^{2 \lambda }\frac{v(z)}{2} \cdot\xi(z)dz&\geq \frac{\alpha}{D_1} (
\int_{z_1}^{z_2}1 dz-\int_{\{ z\in [z_1,z_2]\ |\ \  \frac {\pi}{4}\leq |\frac{v(Y)}{2}| \leq \frac{5\pi}{12}\}} 1 dz)\\
&\geq \frac{\alpha}{D_1}(|z_2-z_1|-  \frac{18  \ 4^{(\lambda-1)}}{( 2-\sqrt{3} )^{{(\lambda-1)}}}  D_1E_0 ). \end{split}\]
The above estimate guarantees proper control on the singular integrals in $P$, $P_x$, $Q$ and $Q_x$, which decreases quickly as $|Y-Y^{'}|\rightarrow +\infty$. Therefore,
\[|\int_{-\infty}^{+\infty}  e^{-|\int_{Y}^{Y^{'}}\cos^{2\lambda}\frac{v(s)}{2} \cdot\xi(s)ds|} f(Y^{'})d Y^{'}|\leq |h*f(Y)|,\]
where $h(z)=\min\{1, e^{\frac{\alpha}{D_1}(  \frac{18  \ 4^{(\lambda-1)}}{( 2-\sqrt{3} )^{{(\lambda-1)}}}D_1E_0-|z|)}\}$. Then,  for every $q\geq 1$, (\ref{Singular2}) follows from the  Young inequality.
And $\|h\|_{L^1}$ has the following estimate.
\[\begin{split}\|h\|_{L^1}&=\int_{-  \frac{18  \ 4^{(\lambda-1)}}{( 2-\sqrt{3} )^{{(\lambda-1)}}}D_1E_0 }^{  \frac{18  \ 4^{(\lambda-1)}}{( 2-\sqrt{3} )^{{(\lambda-1)}}}D_1E_0 } 1 dz+\int_{ \frac{36  \ 4^{(\lambda-1)}}{( 2-\sqrt{3} )^{{(\lambda-1)}}}D_1E_0}^{+\infty} e^{\frac{  \alpha[\frac{18  \ 4^{(\lambda-1)}}{( 2-\sqrt{3} )^{{(\lambda-1)}}}D_1E_0- z]}{D_1}}dz\\
&\qquad+\int^{- \frac{18  \ 4^{(\lambda-1)}}{( 2-\sqrt{3} )^{{(\lambda-1)}}}D_1E_0}_{-\infty} e^{\frac{ \alpha[\frac{18  \ 4^{(\lambda-1)}}{( 2-\sqrt{3} )^{{(\lambda-1)}}}D_1E_0+ z]}{D_1}}dz\\
&= \frac{36  \ 4^{(\lambda-1)}}{( 2-\sqrt{3} )^{{(\lambda-1)}}}D_1E_0+\frac{2D_1}{\alpha}.\end{split}\]

\end{proof}
%

\end{document}